  \newskip\prethm \prethm3.0pt plus1.3pt minus.4pt
  \newskip\posthm \posthm2.7pt plus1.4pt minus.3pt
  \newtheoremstyle{STATEMENT}%
       {\prethm}{\posthm}{\itshape}{\parindent}{\scshape}
       {.}{.6em plus.2em minus.1em}{}
  \newtheoremstyle{EXPLANATION}%
       {\prethm}{\posthm}{}{\parindent}{\scshape}
       {.}{.6em plus.2em minus.1em}{}
\theoremstyle{STATEMENT}
\newtheorem{theorem}{Theorem}[section]
\newtheorem{lemma}{Lemma}[section]
\newtheorem{assertion}{Assertion}[section]
\newtheorem{corollary}{Corollary}[section]
\theoremstyle{EXPLANATION}
\newtheorem{definition}{Definition}[section]
\newtheorem{remark}{Remark}[section]
\begin{document}

\title{Local bi-integrability of bi-Hamiltonian systems, \\ Part II: Real smooth case}
\author{I.\,K.~Kozlov\thanks{No Affiliation, Moscow, Russia. E-mail: {\tt ikozlov90@gmail.com} }
}
\date{}

\maketitle

\begin{abstract} We prove that any bi-Hamiltonian system $v = \left(\mathcal{A} + \lambda \mathcal{B}\right)dH_{\lambda}$ on a real smooth manifold that is Hamiltonian with respect all Poisson brackets $\left(\mathcal{A} + \lambda \mathcal{B}\right)$ is locally bi-integrable.  We construct a complete set of functions $\mathcal{G}$ in bi-involution by extending the set of standard integrals $\mathcal{F}$ consisting of Casimir functions of Poisson brackets, eigenvalues of the Poisson pencil, and the Hamiltonians. Moreover,  we show that at a generic point of $M$ differentials of the extended family $d \mathcal{G}$ can realize any bi-Lagrangian subspace $L$ containing the differentials of the standard integrals $d \mathcal{F}$. 
\end{abstract}

\tableofcontents

\section{Introduction}

This article serves as a continuation of the research presented in \cite{Kozlov24BiIntBiPoissReduct}. Prior familiarity with that work is advisable. We use the notation and statements established in \cite{Kozlov24BiIntBiPoissReduct}.
Let $M$ be a real $C^{\infty}$-smooth manifold and $\mathcal{P} = \left\{ \mathcal{A}_{\lambda} = \mathcal{A} + \lambda \mathcal{B} \right\}_{\lambda \in \bar{\mathbb{R}}}$ be a \textbf{Poisson pencil} of compatible Poisson bracket on it. Here  $\bar{\mathbb{R}} = \mathbb{R} \cup \left\{\infty\right\}$ and $\mathcal{A}_{\infty} = \mathcal{B}$.   A dynamical system $v = \dot{x}$ on $M$ is called \textbf{bi-Hamiltonian w.r.t. a pencil} $\mathcal{P}$ if it is Hamiltonian w.r.t. all brackets of the pencil, i.e. for any $\lambda \in \bar{\mathbb{R}}$ there exists a (smooth) function $H_{\lambda}$ such that\begin{equation} \label{Eq:BiHam1} v = \mathcal{A}_{\lambda} d H_{\lambda}. \end{equation}  The \textbf{rank} of a Poisson pencil $\mathcal{P} = \left\{\mathcal{A} + \lambda \mathcal{B}\right\}$ on $M$ is \begin{equation} \label{Eq:PencilRank} \operatorname{rk} \mathcal{P}=\max_{x \in M, \lambda \in \bar{\mathbb{R}}} \left( \mathcal{A}(x) + \lambda \mathcal{B}(x) \right). \end{equation}

\begin{definition} A bi-Hamiltonian system \eqref{Eq:BiHam1} on a manifold $M$ is \textbf{locally bi-integrable} if  in a neighborhood of a generic point $x \in M$ there exist a set of integrals $\mathcal{G} = \left\{g_1, \dots, g_N\right\}$  satisfying the following conditions:

\begin{enumerate}
    \item All the functions $g_1,\dots, g_N$ are in \textbf{bi-involution}, meaning they commute w.r.t. all Poisson brackets $\mathcal{A}_{\lambda} = \mathcal{A} + \lambda \mathcal{B}, \lambda \in \bar{\mathbb{R}}$.
    
    \item The family $\mathcal{G}$ is \textbf{complete}, meaning that $N =  \dim M - \frac{1}{2}  \operatorname{rk} \mathcal{P}$ and $g_1, \dots, g_N$ are functionally independent, i.e. $dg_1 \wedge \dots \wedge dg_N \not = 0$ almost everywhere.

\end{enumerate}  \end{definition}

The following statement was proved in \cite{Kozlov24BiIntBiPoissReduct} for the real smooth case, when all eigenvalues of the Poisson pencil  $\mathcal{P} = \left\{\mathcal{A} + \lambda \mathcal{B}\right\}$ are real, and for the complex analytic case. In this paper, we extend that result to encompass the general real smooth case.

\begin{theorem} \label{T:MultHamBiInt} Let $\mathcal{P} = \left\{ \mathcal{A} + \lambda \mathcal{B}\right\}$ be a Poisson pencil on a  real $C^{\infty}$-smooth manifold $M$. If a vector field $v$ is bi-Hamiltonian w.r.t. $\mathcal{P}$, then it is locally bi-integrable. \end{theorem}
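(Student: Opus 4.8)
The plan is to localise at a generic point, reduce by the bi-Poisson splitting of Part~I to a normal form in which the only novelty is concentrated in the blocks with non-real eigenvalues, and then build the family $\mathcal{G}$ block by block, with the real and imaginary parts of the complex eigenvalues playing the role that the real eigenvalues played in \cite{Kozlov24BiIntBiPoissReduct}. Concretely, I would first restrict to a neighbourhood of a generic $x \in M$ where the Jordan--Kronecker type of $\mathcal{A}(x) + \lambda \mathcal{B}(x)$ on $T^*_x M$ is constant, and apply the local bi-Poisson reduction from Part~I to split the structure into a Kronecker part and a Jordan (symplectic) part. The Kronecker part supplies the Casimir functions of the pencil and is exactly what forces the completeness count $N = \dim M - \tfrac12 \operatorname{rk}\mathcal{P}$; since this contribution does not see whether the eigenvalues are real, it is handled verbatim as in \cite{Kozlov24BiIntBiPoissReduct}. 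This reduces the problem to the Jordan part, where $\mathcal{B}$ is invertible, the pencil is a pencil of symplectic forms, and the recursion operator $R$ built from $\mathcal{A}$ and $\mathcal{B}$ is a well-defined Nijenhuis operator whose eigenvalues are precisely the eigenvalues of $\mathcal{P}$.

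The genuinely new ingredient is the treatment of the non-real eigenvalues, which I would handle by complexifying the \emph{fibres} rather than the base: I pass to $T_{\mathbb{C}} M = TM \otimes_{\mathbb{R}} \mathbb{C}$ and to the $\mathbb{C}$-linear extension $R_{\mathbb{C}}$, while keeping $M$ and all functions real and smooth. At a generic point $R_{\mathbb{C}}$ is semisimple with distinct eigenvalues occurring either singly (real) or in complex-conjugate pairs $\lambda_j = a_j + i b_j$, $\bar\lambda_j = a_j - i b_j$ with $b_j \neq 0$; the real-eigenvalue blocks are dealt with as in Part~I, while the $a_j, b_j$ are smooth real functions on $M$, and they are exactly what "eigenvalues of the pencil" should mean in the real smooth category (equivalently, the real coefficients of the characteristic polynomial of $R$). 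On each conjugate pair the eigendistributions are complex-conjugate $R_{\mathbb{C}}$-invariant subbundles, and vanishing of the Nijenhuis torsion of $R$ forces these distributions to be Frobenius-integrable; this lets me treat the pair $(a_j, b_j)$ as real and imaginary parts of a holomorphic coordinate adapted to the complex structure that $R$ induces on the corresponding $4$-dimensional real block.

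I expect the crux to be verifying that the real functions $a_j, b_j$ produced this way are in genuine \emph{bi}-involution, i.e.\ $\{a_j, b_j\}_{\mathcal{A}_\mu} = 0$ for every $\mu \in \bar{\mathbb{R}}$, rather than merely in involution with respect to one bracket. Here I would run the Lenard--Magri/Nijenhuis involutivity argument over $\mathbb{C}$: the complexified eigenvalue $\lambda_j$ and its conjugate $\bar\lambda_j$ are in bi-involution by the torsion-free property together with the compatibility of $\mathcal{A}$ and $\mathcal{B}$, and since $a_j = \tfrac12(\lambda_j + \bar\lambda_j)$ and $b_j = \tfrac1{2i}(\lambda_j - \bar\lambda_j)$ are $\mathbb{R}$-linear combinations of them, the bracket relations descend, with the real and imaginary parts of each complexified bracket identity vanishing separately. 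The care needed is precisely that the brackets $\{\cdot,\cdot\}_{\mathcal{A}_\mu}$ are computed with the complexified structures and that the Cauchy--Riemann-type identities relating $da_j$, $db_j$ and $R$ on the block hold because $(a_j, b_j)$ parametrises the common leaf structure of the two symplectic forms restricted to that block.

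Finally I would reassemble $\mathcal{G}$ from the Kronecker Casimirs, the real eigenvalues and the pairs $a_j, b_j$ from the Jordan blocks, the Hamiltonians $H_\lambda$, and a choice of transverse action-type functions completing each block to a bi-Lagrangian subspace, and then check functional independence and that the total number equals $N = \dim M - \tfrac12 \operatorname{rk}\mathcal{P}$. For the bi-Lagrangian realisation I would observe that within each block the transverse functions can be chosen freely subject only to bi-isotropy, so that $d\mathcal{G}$ sweeps out every bi-Lagrangian subspace of the bi-symplectic block containing $d\mathcal{F}$; the block decomposition then assembles these per-block choices into an arbitrary bi-Lagrangian $L \supseteq d\mathcal{F}$ on the whole reduced space. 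Once the complex blocks are understood and the bi-involution of the previous paragraph is secured, this last step is essentially linear-algebra bookkeeping.
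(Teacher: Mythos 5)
There is a genuine gap, and it sits at the center of your argument: you assume that at a generic point of $M$ the complexified recursion operator $R_{\mathbb{C}}$ is \emph{semisimple with distinct eigenvalues}, so that every non-real pair contributes a $4$-dimensional block carrying a single holomorphic coordinate with $(a_j,b_j)$ as its real and imaginary parts. Genericity in $M$ buys you only JK-regularity (local constancy of the Kronecker canonical form), and that is perfectly compatible with Jordan blocks of size greater than one and with eigenvalues of constant multiplicity greater than one on open sets --- a constant-coefficient pencil whose JK form contains a nontrivial Jordan block is non-semisimple at \emph{every} point. Everything downstream of the semisimplicity assumption then fails on the nilpotent part: the blocks are $4l_j$-dimensional rather than $4$-dimensional, there is no single adapted holomorphic coordinate, and --- most importantly --- your final step, choosing transverse ``action-type'' functions ``freely subject only to bi-isotropy'' and calling the realization of an arbitrary bi-Lagrangian $L \supseteq d\mathcal{F}$ linear-algebra bookkeeping, is unjustified. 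Inside a Jordan block with nontrivial nilpotent structure, producing a complete bi-involutive extension whose differentials hit a prescribed bi-Lagrangian subspace is exactly the hard part of the theorem, and it does not follow from bi-isotropy pointwise; one needs some normal form for the pencil, not just for the operator at a point.

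The paper never assumes semisimplicity and resolves the nilpotent difficulty by \emph{flatness} instead. After bi-Poisson reduction with respect to all standard integrals (Hamiltonians included, which is what makes any added commuting functions first integrals for free), the eigenvalues --- including the real and imaginary parts $\alpha_j,\beta_j$ of complex pairs --- become common Casimirs of the reduced pencil (Assertion~\ref{A:AfterRed}); Theorem~\ref{T:TrivKronFact} splits the coordinates by eigenvalue; and then in each factor the eigenvalue is subtracted off so as to become identically zero: via $\hat{\mathcal{A}} = \mathcal{A} - \lambda_1(x)\mathcal{B}$ (Assertion~\ref{A:NewPoissonCasimir}) in the real case, and via $\hat{\mathcal{A}} = \mathcal{A} - \alpha\mathcal{B} - \beta\, J\circ\mathcal{B}$ with $J = f(P)$ in the complex case (Assertion~\ref{A:NewBrack}) --- note that the complex structure from the semisimple part of $P$ appears here as a device to \emph{modify the pencil}, not to diagonalize it, and compatibility of $\hat{\mathcal{A}}$ with $\mathcal{B}$ holds because $\alpha,\beta$ are leaf-wise constant after reduction. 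The resulting pencil has a single zero eigenvalue, hence is flat near a JK-regular point by Turiel's theorem (Theorem~\ref{Th:Turiel2}), with arbitrary Jordan structure allowed; in constant-coefficient coordinates the extension of $\mathcal{F}$ to $\mathcal{G}$ and the realization of $L$ (which stays bi-Lagrangian for the modified pencil by Corollary~\ref{Cor:BiLagrNilp}) really do reduce to linear algebra. Your Lenard--Magri-style argument for the bi-involution of $a_j,b_j$ parallels the paper's Lemma~\ref{L:EigenLemmaRealConj} and is sound in spirit; the flaw is the structural claim about generic points, and repairing it essentially forces the flatness route the paper takes.
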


We prove the following more general Theorem~\ref{T:MainGenTh} in Section~\ref{S:MainProof}. As in \cite{Kozlov24BiIntBiPoissReduct}, the family of integrals $\mathcal{G}$ from Theorem~\ref{T:MainGenTh} contains the following family of standard integrals.

\begin{definition} \label{Def:StandInt} For an open subset $U \subset M$ the \textbf{family of standard integrals} $\mathcal{F}$ on $U$ consists of the following functions:

\begin{enumerate}
    \item Casimir functions $f_{\lambda}$ for brackets $\mathcal{A}_{\lambda}$ that are regular on $U$.

    \item Eigenvalues $\lambda_j(x)$ of the pencil $\mathcal{P}$. We replace each pair of complex-conjugate eigenvalues $\alpha_j(x) \pm i \beta_j(x)$ with the real part $\alpha_j(x)$ and the imaginary part $\beta_j(x)$. 
    
    \item The Hamiltonians $H_{\alpha}$ for all $\alpha \in \bar{\mathbb{R}}$. 
\end{enumerate}

\end{definition}

Similar to the analytic case, the standard integrals are first integrals of a bi-Hamiltonian system (see  Section~\ref{SubS:StandInt}). Casimir functions and eigenvalues may not be well-defined on the entire manifold $M$.  Therefore, we restrict our attention to a sufficiently small neighborhood\footnote{Alternatively, one could consider the germs of these local integrals.} of a point $x \in M$ to ensure their well-definedness. By \cite[Corollary 3.1]{Kozlov24BiIntBiPoissReduct} differentials of local Casimir functions of regular Poisson brackets span the \textbf{core distribution} $\mathcal{K} \subset T^*M$, given by
 \[ \mathcal{K}_x =  \bigoplus_{\lambda - \text{regular for $\mathcal{P}(x)$}} \operatorname{Ker}\mathcal{A}_\lambda (x). \]

\begin{definition} \label{Def:SmallNeigh} We say that a neighborhood $Ux$ of a point $x\in M$ is \textbf{small} if the following two conditions hold:

\begin{enumerate}
    \item The core distribution $\mathcal{K} \subseteq d \mathcal{F}$, where $\mathcal{F}$ is the family of standard integrals on $Ux$.
    
    \item All eigenvalues $\lambda_j$ are finite, i.e. $\lambda_j < \infty$, and are well-defined functions on $Ux$.
\end{enumerate} 

\end{definition}

A point $x \in (M,\mathcal{P})$ is \textbf{JK-regular} if in a neighborhood of $Ox_0$ the pencils $\mathcal{P}(x)$ have the same Kronecker Canonical Form\footnote{The Kronecker canonical form, as established in the Jordan-Kronecker theorem, is applicable to linear pencils defined on complex vector spaces. Formally, we should consider the complexifications of the Poisson pencils $\mathcal{P}(x)$.}, up to the eigenvalues\footnote{Some authors say that $\mathcal{P}(x)$ belong to the same bundle or have the same algebraic type. "Algebraic type" and "bundle of a linear pencil" is roughly the same thing.} (see \cite[Definition 3.1]{Kozlov24BiIntBiPoissReduct}). It is evident that any JK-regular point $x\in M$ that possesses finite eigenvalues $\lambda_j(x) < \infty$ has a small neighborhood. Bi-Lagrangian subspaces are defined in Definition~\ref{Def:BiLagr}. Our main result is the following. 

\begin{theorem} \label{T:MainGenTh} Let $\mathcal{P}$ be a Poisson pencil on $M$ and $v = \mathcal{A}_\lambda d H_{\lambda}$ be a vector field that is bi-Hamiltonian w.r.t. $\mathcal{P}$. Let $x_0 \in M$ be a JK-regular point and $\mathcal{F}$ be a family of standard integrals in a small neighborhood $Ux_0$. Assume that the following two conditions are satisfied:

\begin{enumerate} 

\item Locally, within a neighborhood of $x_0$, \[\dim d \mathcal{F}(x) = \operatorname{const}.\]

\item After bi-Poisson reduction w.r.t. $d \mathcal{F}$ the point $x_0$ remains JK-regular. 

\end{enumerate}

Then in a sufficiently small neighborhood $Ox_0$ the family $\mathcal{F}$ can be extended to complete family of functions $\mathcal{G} \supset \mathcal{F}$ in bi-involution.  Moreover, for any bi-Lagrangian subspace $L \subset T_{x_0}M$ such that $d\mathcal{F}(x_0) \subset L$ we can choose the extension $\mathcal{G}$ such that \begin{equation} \label{Eq:RealizeLag} d \mathcal{G}(x_0)  = L.\end{equation} \end{theorem}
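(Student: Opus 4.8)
The plan is to reduce the realization problem to the Kronecker case by means of the bi-Poisson reduction developed in \cite{Kozlov24BiIntBiPoissReduct}, then to carry out an explicit construction in the reduced (Kronecker) pencil and lift the result back to $M$. Throughout one works with the complexified pencils, as in the footnote to the statement, while keeping track of the real structure.

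First I would invoke condition (1): since $\dim d\mathcal{F}(x)$ is locally constant, the distribution spanned by the Hamiltonian fields $\mathcal{A}_\lambda\, d\mathcal{F}$ is regular, and the joint level sets of $\mathcal{F}$ foliate a neighborhood $Ox_0$ by smooth submanifolds. Applying the bi-Poisson reduction with respect to $d\mathcal{F}$ yields a reduced manifold $\bar{M}$ with a reduced pencil $\bar{\mathcal{P}}$ and a submersion $\pi\colon Ox_0 \to \bar{M}$. The central point is that families in bi-involution on $Ox_0$ containing $\mathcal{F}$ correspond bijectively, via pullback $\pi^*$, to families in bi-involution on $\bar{M}$, and that bi-Lagrangian subspaces $L \subset T^*_{x_0}M$ with $d\mathcal{F}(x_0)\subset L$ correspond to bi-Lagrangian subspaces $\bar{L} \subset T^*_{\bar{x}_0}\bar{M}$ of $\bar{\mathcal{P}}$. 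Completeness of the upstairs family then reduces to completeness downstairs, since $\dim \bar{M} - \frac{1}{2}\operatorname{rk}\bar{\mathcal{P}}$ matches $N - \dim d\mathcal{F}(x_0)$.

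Second, I would show that $\bar{\mathcal{P}}$ is a \emph{Kronecker} pencil near $\bar{x}_0$. By Definition~\ref{Def:StandInt} the family $\mathcal{F}$ contains the regular Casimirs, whose differentials span the core distribution $\mathcal{K}$, and the eigenvalues $\lambda_j$, split into real and imaginary parts; hence after reduction $\bar{\mathcal{P}}$ has no nontrivial Casimirs and no nonconstant eigenvalues. Condition (2), JK-regularity of $\bar{x}_0$, guarantees that the algebraic type of $\bar{\mathcal{P}}$ is constant in a neighborhood, so $\bar{\mathcal{P}}$ is purely Kronecker. This is exactly the situation treated in the Kronecker case of \cite{Kozlov24BiIntBiPoissReduct}. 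For the reduced pencil I would produce a complete family $\bar{\mathcal{G}}$ in bi-involution realizing any prescribed bi-Lagrangian $\bar{L}$: the canonical family given by the coefficients of the Casimir functions of $\bar{\mathcal{A}}_\lambda$ expanded in $\lambda$ is complete and in bi-involution, and its differentials at $\bar{x}_0$ span one such subspace. To hit an arbitrary $\bar{L}$ I would use the rigidity of the Kronecker normal form: any two bi-Lagrangian subspaces are related by a local bi-Poisson automorphism of $\bar{\mathcal{P}}$, and composing the canonical family with such an automorphism — equivalently, adjoining generating functions with prescribed $1$-jet at $\bar{x}_0$ — gives a complete family with $d\bar{\mathcal{G}}(\bar{x}_0) = \bar{L}$. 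Pulling back through $\pi$ and adjoining $\mathcal{F}$ produces $\mathcal{G} = \mathcal{F} \cup \pi^*\bar{\mathcal{G}}$, which is complete, in bi-involution, and satisfies \eqref{Eq:RealizeLag}.

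The main obstacle is the genuinely real feature of Part II: the complex-conjugate eigenvalues $\alpha_j \pm i\beta_j$. Both the reduction and the Kronecker realization are cleanest over $\mathbb{C}$, so the difficulty is to carry them out compatibly with complex conjugation, so that the constructed functions are real and $C^\infty$. Concretely, I must verify that reduction by the real integrals $\alpha_j, \beta_j$ correctly removes the real Jordan blocks attached to complex eigenvalues, and that the bi-Lagrangian realization in the complexified Kronecker pencil descends to a real one — i.e. that the chosen bi-Poisson automorphism and generating functions can be taken conjugation-invariant. Establishing this descent, and ensuring smoothness rather than mere analyticity of the resulting integrals, is precisely where the argument of \cite{Kozlov24BiIntBiPoissReduct} must be extended.
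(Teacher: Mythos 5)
Your first step (bi-Poisson reduction with respect to $d\mathcal{F}$, the correspondence of families in bi-involution and of bi-Lagrangian subspaces under $\pi^*$, and the dimension bookkeeping) matches the paper. But your second step rests on a misidentification that inverts the actual situation: after reduction by $d\mathcal{F}$ the pencil is \emph{not} Kronecker — it is the opposite extreme. Including the Casimirs of the regular brackets (whose differentials span the core $\mathcal{K}$) is what kills the Kronecker part, while the eigenvalues do not disappear: adjoining $\lambda_j$ (resp. $\alpha_j,\beta_j$) to $\mathcal{F}$ and reducing turns them into common \emph{Casimir functions} of the reduced pencil, which retains all its Jordan blocks. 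This is exactly Assertion~\ref{A:AfterRed}: for the reduced pencil $\deg p_{\mathcal{P}'} = \operatorname{rk}\mathcal{P}'$, so its JK decomposition consists of Jordan blocks plus trivial $1\times 1$ Kronecker blocks, and on each common symplectic leaf $\mathcal{P}'$ is nondegenerate, i.e.\ purely Jordan. Your claim that ``$\bar{\mathcal{P}}$ has no nontrivial Casimirs and no nonconstant eigenvalues, so it is purely Kronecker'' is therefore false, and the machinery you build on it produces nothing: on a leaf the regular brackets of $\mathcal{P}'$ are nondegenerate and have no Casimirs at all, so ``coefficients of the Casimir functions expanded in $\lambda$'' give an empty family. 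There is also an internal inconsistency: a Kronecker pencil has no eigenvalues, so the complex-conjugate-eigenvalue difficulty that you correctly identify as the crux of Part~II could never arise within your framework — a sign that the case analysis is anchored to the wrong normal form.

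What the theorem actually requires after the reduction step is a treatment of the surviving \emph{Jordan} part, and this is where the paper's proof lives: first split the reduced pencil by eigenvalues (Theorem~\ref{T:TrivKronFact}, via Turiel's splitting lemma or the splitting theorem for Nijenhuis operators); then, in each factor, shift the eigenvalue to zero — for a real eigenvalue via $\hat{\mathcal{A}} = \mathcal{A} - \lambda_1(x)\mathcal{B}$, legitimate precisely because $\lambda_1$ is now a common Casimir (Assertion~\ref{A:NewPoissonCasimir}), and for a conjugate pair via the complex structure $J = f(P)$ and $\hat{\mathcal{A}} = \mathcal{A} - \alpha(x)\mathcal{B} - \beta(x)\, J\circ\mathcal{B}$ (Assertion~\ref{A:NewBrack}); finally invoke Turiel's flatness theorem (Theorem~\ref{Th:Turiel2}) to obtain constant-coefficient coordinates, in which $L$ is realized simply by adjoining linear coordinate functions, with Corollary~\ref{Cor:BiLagrNilp} guaranteeing that $L$ remains bi-Lagrangian for the modified pencil $\hat{\mathcal{P}}$ in the complex case. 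Note also that your proposed realization mechanism — ``any two bi-Lagrangian subspaces are related by a local bi-Poisson automorphism'' — is an unproven and, for Jordan pencils, highly dubious transitivity claim (bi-Lagrangian subspaces of Jordan pencils form a nontrivial moduli, cf.\ \cite{Kozlov24BiLagr}); the paper deliberately avoids any such statement by fixing $L$ first and constructing the flat coordinates adapted to it.
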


This theorem can be considered as an answer to the questions \cite[Problem 13]{BolsinovTsonev17} and \cite[Problem 4.3]{BolsinovOpen17} regarding the local realization of bi-Lagrangian subspaces by the distribution $d\mathcal{G}$. These questions were initially posed in a broad and general manner. Since, it is "reasonable" to include the standard integrals $\mathcal{F}$ into the family $\mathcal{G}$, it is natural to consider only bi-Lagrangian subspaces $L$ containing $d \mathcal{F}$.

\subsection{Conventions and acknowledgements}

\textbf{Conventions.} All manifolds (functions, Poisson brackets, etc)  are  real $C^{\infty}$-smooth. Some property holds ``almost everywhere'' or ``at a generic point'' of a manifold $M$ if it holds on an open dense subset of $M$.  We denote $\bar{\mathbb{R}} = \mathbb{R} \cup \left\{ \infty \right\}$ and $\bar{\mathbb{C}} = \mathbb{C} \cup \left\{ \infty \right\}$.

\par\medskip

\textbf{Acknowledgements.} The author would like to thank A.\,V.~Bolsinov, A.\,M.~Izosimov and A.\,Yu.~Konyaev for useful comments. 

\section{Bi-Lagrangian subspaces}

Let $A$ and $B$ be skew-symmetric bilinear forms
on a finite-dimensional complex vector space $V$. We call a one-parametric family of skew-symmetric forms \[\mathcal{L} = \left\{A + \lambda B \, \, \bigr| \,\, \lambda \in \bar{\mathbb{C}} \right\}\] a \textbf{linear pencil}. 
The \textbf{rank} of a linear pencil $\mathcal{L} = \left\{ A + \lambda B\right\}$ is \[ \operatorname{rk} \mathcal{L} = \max_{\lambda \in\bar{\mathbb{C}}} \operatorname{rk} (A +\lambda B).\] A value $\lambda_0 \in \bar{\mathbb{C}}$ is \textbf{regular} if $\operatorname{rk} A_{\lambda_0} = \operatorname{rk} \mathcal{L}$. Bi-Lagrangian subspaces were extensively studied in   \cite{Kozlov24BiLagr}.

\begin{definition} \label{Def:BiLagr} A subspace $U \subset V$ of a bi-Poisson vector space $(V, \mathcal{L})$ is called 

\begin{itemize}

\item \textbf{admissible} if its skew-orthogonal complements $U^{\perp_{A_\lambda}}$ coincide for almost all forms $A_\lambda \in \mathcal{L}$.

\item \textbf{bi-isotropic} if $A_{\lambda}(u, v) = 0$ for all $u, v\in V$ and all $A_{\lambda} \in \mathcal{L}$;

\item  \textbf{bi-Lagrangian} if it is bi-isotropic and $\dim U = \dim V - \frac{1}{2} \operatorname{rk} \mathcal{L}$.

\end{itemize}

 \end{definition}

By \cite[Assertion 3.2]{Kozlov24BiLagr} a subspace $U \subset (V,\mathcal{L})$ is bi-Lagrangian if and only if it is maximal (w.r.t. inclusion) bi-isotropic and admissible.

\begin{lemma}[{\cite[Lemma 3.3]{Kozlov24BiLagr}}] \label{L:NonGenDescBiLagr} Let $\mathcal{P} = \left\{ A + \lambda B \right\}$ be a linear pencil on $V$. Assume that $B$ is nondegenerate (i.e. $\operatorname{Ker} B = 0$) and let $P = B^{-1}A$ be the recursion operator.  A subspace $L \subset (V, \mathcal{P})$ is bi-Lagrangian w.r.t. $B$ if and only if it is Lagrangian w.r.t. $B$ and $P$-invariant. \end{lemma}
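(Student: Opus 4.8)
The plan is to translate both ``bi-Lagrangian'' and ``$P$-invariant'' into conditions expressed through the single nondegenerate form $B$, and then to compare them. The first step is to record the defining adjunction identity of the recursion operator: since $P = B^{-1}A$, for all $u, v \in V$ one has $B(Pu, v) = A(u,v)$. I would also note, using skew-symmetry of both $A$ and $B$, that this gives $B(Pu,v) = B(u, Pv)$, i.e. $P$ is self-adjoint with respect to the symplectic form $B$; although this symmetry is not strictly needed below, it clarifies why the two orthogonality computations are interchangeable.

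Second, I would unwind the notion of bi-isotropy for the pencil. Since $(A + \lambda B)(u,v) = A(u,v) + \lambda B(u,v)$ vanishes for all $\lambda \in \bar{\mathbb{C}}$ if and only if $A(u,v) = 0$ and $B(u,v) = 0$, a subspace $L$ is bi-isotropic exactly when it is isotropic with respect to both $A$ and $B$ simultaneously. Next I would settle the dimension bookkeeping: because $B$ is nondegenerate, $\operatorname{rk} B = \dim V$ is attained at $\lambda = \infty$, so $\operatorname{rk}\mathcal{P} = \dim V$ and hence a bi-Lagrangian subspace has dimension $\dim V - \tfrac{1}{2}\dim V = \tfrac{1}{2}\dim V$, which is exactly the dimension of a $B$-Lagrangian subspace. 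Combining these two observations, $L$ is bi-Lagrangian if and only if $L$ is $B$-Lagrangian and, in addition, isotropic with respect to $A$.

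It remains to identify ``$A$-isotropic'' with ``$P$-invariant'' under the standing assumption that $L$ is $B$-Lagrangian. Here I would use the characterization of a Lagrangian subspace as one coinciding with its own $B$-skew-orthogonal complement, $L = L^{\perp_{B}}$. For $u \in L$ the membership $Pu \in L = L^{\perp_{B}}$ is equivalent to $B(Pu, v) = 0$ for all $v \in L$, which by the adjunction identity of the first step is equivalent to $A(u,v) = 0$ for all $v \in L$. Letting $u$ range over $L$ shows directly that $P L \subseteq L$ if and only if $L$ is $A$-isotropic, which closes the equivalence in both directions.

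I do not anticipate a genuine obstacle: the argument is a short chain of equivalences resting on the identity $B(Pu,v) = A(u,v)$ and on $L = L^{\perp_{B}}$. The only point requiring care is the dimension count, where the nondegeneracy of $B$ must be invoked to guarantee $\operatorname{rk}\mathcal{P} = \dim V$, so that the bi-Lagrangian and $B$-Lagrangian dimensions agree; without this hypothesis the equivalence would fail.
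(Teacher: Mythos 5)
Your proof is correct and complete: the identity $B(Pu,v)=A(u,v)$, the observation that bi-isotropy means simultaneous $A$- and $B$-isotropy, and the rank count $\operatorname{rk}\mathcal{P}=\dim V$ (from nondegeneracy of $B$ at $\lambda=\infty$) together yield exactly the stated equivalence, with the key step $PL\subseteq L \Leftrightarrow L \text{ is } A\text{-isotropic}$ carried out correctly via $L=L^{\perp_B}$. The paper itself does not prove this lemma but imports it from \cite[Lemma 3.3]{Kozlov24BiLagr}; your argument is the standard direct one, matching what that reference does, so there is nothing further to reconcile.
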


We need the following simple statement.

\begin{corollary} \label{Cor:BiLagrNilp} Let $\mathcal{P} = \left\{ A + \lambda B \right\}$ be a linear pencil and $\operatorname{Ker} B = 0$. Let $P = B^{-1}A$ be the recursion operator, $N$ be its nilpotent part and $A' =  B \circ N$. If a subspace $L \subset V$ is bi-Lagrangian w.r.t. $\mathcal{P}$, then it is also bi-Lagrangian w.r.t. $\mathcal{P}' = \left\{A' + \lambda B \right\}$.\end{corollary}

\section{Poisson pencils}

Two Poisson brackets $\mathcal{A}$ and $\mathcal{B}$ are \textbf{compatible} if any their linear combination $\alpha \mathcal{A} + \beta \mathcal{B}$ with constant coefficients is also a Poisson bracket. In local coordinates $x^i$ this condition can be written as  \begin{equation} \label{Eq:CompBrack} \sum_{\operatorname{cyc}(i, j, k)} \sum_{s} \left(\mathcal{A}^{is}  \frac{\partial \mathcal{B}^{jk}}{\partial x^s} +\mathcal{B}^{is}  \frac{\partial \mathcal{A}^{jk}}{\partial x^s} \right) =0,\end{equation} where $\sum_{\operatorname{cyc}(i, j, k)}$ denotes the cyclic sum over the indices $i, j$ and $k$. It is well-known (see e.g. \cite{DufourZung05}) that Poisson brackets  $\mathcal{A}, \mathcal{B}$  are compatible if and only if their Schouten--Nijenhuis bracket vanishes $[\mathcal{A}, \mathcal{B} ] = 0$.

\begin{assertion} \label{A:BrackComCas} Assume that in coordinates $\left(x^1,\dots, x^{n}, z^{n+1},\dots, z^{n+m}\right)$ a Poisson pencil $\mathcal{A}_{\lambda} = \mathcal{A}+\lambda \mathcal{B}$ has the form \begin{equation} \label{Eq:BrackComCas} \mathcal{A}_{\lambda} = \left( \begin{matrix} \hat{\mathcal{A}_{\lambda}} (x, z) & 0_{n\times m} \\ 0_{m \times n} & 0_{m\times m}\end{matrix} \right). \end{equation} The Poisson brackets $\mathcal{A}$ and $\mathcal{B}$ are compatible if and only if the corresponding brackets $\hat{\mathcal{A}} (x, z)$ and $\hat{\mathcal{B}} (x, z)$ are compatible for all fixed values of $z$. \end{assertion}

\begin{proof}[Proof of Assertion~\ref{A:BrackComCas} ] Condition~\eqref{Eq:CompBrack} holds for the brackets \eqref{Eq:BrackComCas} if and only if it holds for the indices $i, j, k$ and $s$  ranging from $1$ to $n$. Assertion~\ref{A:BrackComCas} is proved. \end{proof}

In \cite{BolsZhang} the \textbf{characteristic polynomial} $p_{\mathcal{P}}(\lambda)$ of $\mathcal{P} = \left\{ \mathcal{A} + \lambda \mathcal{B}\right\}$ is defined as follows. Consider all diagonal minors $\Delta_I$ of the matrix $\mathcal{A} + \lambda \mathcal{B}$ of order rank $\mathcal{P}$ and take the Pfaffians $\operatorname{Pf}(\Delta_I)$, i.e. square roots, for each of them. The characteristic polynomial is the greatest common divisor of all these Pffaffians: \[ p_{\mathcal{P}} = \operatorname{gcd} \left(\operatorname{Pf}(\Delta_I) \right). \]

\subsection{Constructing new Poisson pencils using Casimir functions}

A function $f$ is a \textbf{Casimir function} of a Poisson bracket $\mathcal{A}$ if $\mathcal{A} df = 0$. We denote the set of all Casimir functions associated with a Poisson bracket $\mathcal{A}$ as $\mathcal{C}\left( \mathcal{A}\right)$.

\begin{assertion}[{\cite[Assertion 3.1]{Kozlov24BiIntBiPoissReduct}}] \label{A:NewPoissonCasimir} Let $\mathcal{A}$ and $\mathcal{B}$ be two compatible Poisson brackets on $M$. Assume that $f$ is a Casimir function for both brackets, i.e. $f \in \mathcal{C} \left( \mathcal{A} \right) \cap \mathcal{C}\left(\mathcal{B}\right)$. Then we have the following:

\begin{enumerate}
    \item The sum $\mathcal{A}_f = \mathcal{A} + f \mathcal{B}$ is a well-defined Poisson bracket on $M$.
    
    \item The bracket $\mathcal{A}_f$ is compatible with the brackets $\mathcal{A}$ and $\mathcal{B}$.

    \item The KCF of $\mathcal{A}_f(x) + \lambda \mathcal{B}(x)$ can be obtained from KCF of $\mathcal{A}(x) + \lambda \mathcal{B}(x)$ if we replace each eigenvalue $\lambda_j(x)$ with $\lambda_j(x) + f(x)$.

    \item Functions $g$ and $h$ are in bi-involution w.r.t. $\mathcal{A}$ and $\mathcal{B}$ if and only if they are in bi-involution w.r.t. $\mathcal{A}_f$ and $\mathcal{B}$.
\end{enumerate} 

\end{assertion}

\subsection{Poisson pencils with common Casimirs}  \label{S:LocTur3Flat}

\begin{definition} 
A Poisson pencil $\mathcal{P} = \left\{ \mathcal{A}_{\lambda}\right\}$ on $M$ is \textbf{flat} if for any point $x_0 \in M$ there exist local coordinates $x^1, \dots, x^n$ such that all Poisson structures $\mathcal{A}_{\lambda}$ have constant coefficients: \[ \mathcal{A}_{\lambda} = \sum_{i < j} \left( c_{ij} + \lambda d_{ij} \right) \frac{\partial}{\partial x^i} \wedge \frac{\partial}{\partial x^j}, \qquad c_{ij} = \operatorname{const}, \quad d_{ij} = \operatorname{const}.\]
\end{definition} 

Let $\mathcal{P} = \left\{ \mathcal{A}_{\lambda} = \mathcal{A} + \lambda \mathcal{B}\right\}$ be a Poisson pencil on real $C^{\infty}$-smooth manifold $M$ and $p_{\mathcal{P}(x)}$ be its characteristic polynomial at $x\in M$.

\begin{definition} We call a Poisson pencil $\mathcal{P}$ a \textbf{pencil with common Casimirs} if for all $x \in M$ we have \[\deg p_{\mathcal{P}(x)} =  \operatorname{rk} \mathcal{P}(x) = \operatorname{const}.\] \end{definition} A Jordan-Kronecker decomposition\footnote{See \cite[Section 2]{Kozlov24BiIntBiPoissReduct} for the definition of JK decomposition.} of the Poisson pencil  $\mathcal{P}(x)$ consists of Jordan blocks and $r = \dim M - \operatorname{rk}\mathcal{P}$ trivial $1 \times 1$ Kronecker blocks.   By selecting common local Casimir functions $z_1, \dots, z_r$ and extending them to local coordinates $x_1, \dots, x_{n - r}, z_1, \dots, z_r$, the matrices of Poisson brackets assume the following form: \begin{equation} \label{Eq:LocCommCas} \mathcal{A}_{\lambda} = \left(\begin{matrix} \hat{\mathcal{A}}_{\lambda}(x,z) & 0 \\ 0 & 0 \end{matrix} \right),\end{equation}  where the brackets $\hat{\mathcal{A}}_{\lambda}$ are nondegenerate for regular $\lambda$. \cite[Theorem 2]{turiel} provides a simple sufficient condition for flatness of Poisson pencils with common Casimirs. We reformulate this theorem as follows. 

\begin{theorem}[F.\,J.~Turiel, \cite{turiel}] \label{Th:Turiel2} A Poisson pencil $\mathcal{P}$ with common Casimirs on $M$ and a single zero eigenvalue $\lambda(x) = 0$ is flat in a neighborhood of any JK-regular point $x \in M$.\end{theorem}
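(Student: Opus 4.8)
The plan is to reduce the statement to the nondegenerate (symplectic) situation and then to recognize the resulting data as an instance of Turiel's local normal form, i.e. to match the hypotheses of \cite[Theorem 2]{turiel}. First I would exploit the common Casimirs. Since $\mathcal{P}$ has common Casimirs, at a JK-regular point the Jordan--Kronecker decomposition consists of Jordan blocks together with exactly $r = \dim M - \operatorname{rk}\mathcal{P}$ trivial $1\times 1$ Kronecker blocks and no nontrivial Kronecker blocks. Choosing common local Casimir functions $z_1,\dots,z_r$ and completing them to coordinates $x_1,\dots,x_{n-r},z_1,\dots,z_r$ puts the pencil into the block form \eqref{Eq:LocCommCas}, in which $\hat{\mathcal{A}}_\lambda(x,z)$ is nondegenerate for regular $\lambda$. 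By Assertion~\ref{A:BrackComCas} the reduced pencil $\hat{\mathcal{P}} = \{\hat{\mathcal{A}}_\lambda\}$ is again compatible (for each fixed value of $z$), so the problem is transferred to producing a constant-coefficient frame for $\hat{\mathcal{P}}$, the $z_j$ themselves already being flat coordinates.

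Next I would encode the remaining structure as a symplectic--Nijenhuis pair. After the reduction $\hat{\mathcal{B}}$ is a symplectic form, and because the pencil has a single eigenvalue $\lambda(x) \equiv 0$, the recursion operator $N = \hat{\mathcal{B}}^{-1}\hat{\mathcal{A}}$ has $0$ as its only eigenvalue, hence $N$ is nilpotent (this is consistent with the notation of Corollary~\ref{Cor:BiLagrNilp}, where $A' = B \circ N$ simply reproduces $\hat{\mathcal{A}}$ in the present case). The compatibility $[\mathcal{A},\mathcal{B}]=0$ translates into $N$ having vanishing Nijenhuis torsion and being compatible with $\omega = \hat{\mathcal{B}}^{-1}$, so that $(\omega, N)$ is an $\omega N$-structure. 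JK-regularity of $x$ (preserved under the reduction) guarantees that the Segre/Jordan type of $N$ is constant near $x$. These are exactly the data of Turiel's theorem: a symplectic form together with a torsion-free nilpotent recursion operator of constant algebraic type.

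It then remains to produce a chart in which $\omega$ and $N$ — equivalently $\hat{\mathcal{A}}$ and $\hat{\mathcal{B}}$ — are \emph{simultaneously} constant; this is the content I would draw from \cite[Theorem 2]{turiel}. Structurally the argument runs as follows: vanishing of the Nijenhuis torsion makes the generalized kernels $\operatorname{Ker} N^k$ integrable, and for a nilpotent operator of constant type this yields a local cyclic frame adapted to $N$; the symplectic compatibility then organizes these directions into Darboux-like conjugate pairs, and the constant Segre type prevents any jump that would obstruct a single constant model. The output is a coordinate system in which $\hat{\mathcal{P}}$ has constant coefficients, so $\hat{\mathcal{P}}$, and therefore $\mathcal{P}$, is flat.

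The genuinely hard step — and precisely the part I would invoke from Turiel rather than re-derive — is this last simultaneous flattening. Torsion-freeness alone produces integrable distributions, and Darboux's theorem alone produces a constant $\omega$, but reconciling the two in one chart so that $N$ is linearized at the same time is delicate, and it is exactly here that the constant-Jordan-type (JK-regularity) hypothesis is indispensable. My proposal is therefore to carry out the two reductions above carefully, verify that the reduced pair $(\omega, N)$ meets the hypotheses of \cite[Theorem 2]{turiel}, and conclude flatness by appeal to that result.
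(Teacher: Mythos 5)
Your overall strategy coincides with what the paper actually does: the paper contains no proof of Theorem~\ref{Th:Turiel2} at all --- it is stated as a reformulation of \cite[Theorem 2]{turiel} and used as a black box in Section~\ref{S:MainProof} --- so invoking Turiel for the hard simultaneous-flattening step is exactly the intended route. Your preparatory reductions are also sound: the common-Casimir hypothesis $\deg p_{\mathcal{P}(x)} = \operatorname{rk}\mathcal{P}(x) = \operatorname{const}$ kills all nontrivial Kronecker blocks, yielding the block form \eqref{Eq:LocCommCas} with $\hat{\mathcal{A}}_{\lambda}$ nondegenerate for regular $\lambda$; the single eigenvalue $\lambda(x)\equiv 0$ makes the recursion operator $N = \hat{\mathcal{B}}^{-1}\hat{\mathcal{A}}$ nilpotent with vanishing torsion (compatibility of nondegenerate brackets is equivalent to $N$ being Nijenhuis, as the paper notes in the footnote to Theorem~\ref{T:TrivKronFact}); and JK-regularity survives restriction to a leaf since only the trivial Kronecker blocks are discarded.

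The one genuine gap is the parameter dependence on the Casimirs. Assertion~\ref{A:BrackComCas} and your appeal to Turiel operate \emph{for each fixed value of} $z$, which by itself delivers only what the paper's closing Remark calls \textbf{leaf-wise flatness}: a chart on each leaf $S_z$ in which $\hat{\mathcal{A}}_{\lambda}(\cdot,z)$ is constant. Flatness of $\mathcal{P}$ on $M$ demands a single chart $(x(z),z)$ in which the coefficients are constant in \emph{all} variables, including $z$ --- and the paper's Remark shows this distinction is not pedantic, since in general the leaf-wise normal form retains eigenvalues $\lambda(z)$ depending on the Casimirs. Your phrase ``the $z_j$ themselves already being flat coordinates'' quietly presupposes the conclusion. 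What rescues the argument here is that the eigenvalue is identically $0$, so the per-leaf Turiel normal form has no moduli: its entries are the universal $0$/$1$ constants of the nilpotent Jordan--Kronecker model, with Jordan type locally constant by JK-regularity. Hence the only missing ingredient is that Turiel's flattening coordinates can be chosen \emph{smoothly in the parameter} $z$; this must either be extracted from the parametric nature of Turiel's construction or be covered by the degenerate (presymplectic, common-Casimir) form of his Théorème~2 directly --- it is not automatic from a leaf-by-leaf invocation of a symplectic statement, and your write-up should say explicitly which of these two readings of \cite[Theorem 2]{turiel} you are using.
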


\section{Bi-Poisson reduction} \label{S:BiPoisRed}

 Bi-Poisson reduction is the fundamental technique that enables us to prove bi-integrability of bi-Hamiltonian systems. In Section~\ref{SubS:LineRed} we present a linear analogue of bi-Poisson reduction for linear pencils. The main result is Theorem~\ref{T:BiPoissRed} in Section~\ref{SubS:BiPoisRedTh}. In Section~\ref{SubS:Split}, we will demonstrate that after performing bi-Poisson reduction, it is possible to ``split'' the Poisson pencil according to its eigenvalues.

\subsection{Linear bi-Poisson reduction} \label{SubS:LineRed}

The next theorem is an analogue of linear symplectic reduction for a pair of 2-forms.

\begin{theorem} \label{T:BiPoissReduction} Let $\mathcal{L} = \left\{A_{\lambda} \right\}$ be a linear pencil on $V$ and let $U\subset \left(V, \mathcal{L} \right)$ be an admissible bi-isotropic subspace. Then

\begin{enumerate}

\item The induced pencil $\mathcal{L} ' = \left\{A'_{\lambda}\right\}$ on $U^{\perp}/ U$ is well-defined. 

\item If $L$ is a bi-Lagrangian (or bi-isotropic) subspace of $(V, B)$, then \[ L' = \left( \left( L \cap U^{\perp}\right) + U \right) / U\] is a bi-Lagrangian (respectively, bi-isotropic) subspace of $U^{\perp}/U$. 

\end{enumerate}

\end{theorem}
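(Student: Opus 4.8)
The plan is to establish both parts by reducing to the linear-algebraic theory of admissible/bi-isotropic subspaces already developed, using the characterization of bi-Lagrangian as maximal bi-isotropic and admissible (Assertion 3.2 of \cite{Kozlov24BiLagr}). For part (1), I would first check that the skew-orthogonal complement $U^{\perp}$ is independent of $\lambda$: since $U$ is admissible, $U^{\perp_{A_\lambda}}$ coincides for almost all $\lambda$, and by a density/continuity argument (the family $\lambda \mapsto U^{\perp_{A_\lambda}}$ is defined by linear equations depending polynomially on $\lambda$) I would argue that this common complement, which I denote $U^\perp$, contains $U$ because $U$ is bi-isotropic (so $U \subseteq U^{\perp_{A_\lambda}}$ for every $\lambda$). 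Then the quotient $U^\perp/U$ makes sense, and each form $A_\lambda$ descends: I would verify that $A_\lambda$ restricted to $U^\perp$ has $U$ in its kernel, which is precisely the statement that $U \subseteq (U^\perp)^{\perp_{A_\lambda}}$, again using bi-isotropy together with the definition of the skew-orthogonal complement. This gives a well-defined induced form $A'_\lambda$ on $U^\perp/U$ for each $\lambda$, and linearity in $\lambda$ is inherited, so $\mathcal{L}' = \{A'_\lambda\}$ is a genuine linear pencil.

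For part (2), the cleanest route is to treat the bi-isotropic case first and obtain the bi-Lagrangian case by a dimension count. Let $L$ be bi-isotropic and set $L' = ((L \cap U^\perp) + U)/U$. I would show $L'$ is bi-isotropic directly: given two representatives $v, w \in (L \cap U^\perp) + U$, write each as a sum of a piece in $L \cap U^\perp$ and a piece in $U$, then expand $A_\lambda(v,w)$ using bilinearity. The cross terms and the $U$-$U$ term vanish because $U$ is bi-isotropic and because the $L \cap U^\perp$ pieces lie in $U^\perp = U^{\perp_{A_\lambda}}$ (so they are $A_\lambda$-orthogonal to $U$), and the remaining $L$-$L$ term vanishes because $L$ is bi-isotropic. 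Since this holds for all $\lambda$ where $A_\lambda \in \mathcal{L}$, and $A'_\lambda$ is defined by $A'_\lambda([v],[w]) = A_\lambda(v,w)$, the subspace $L'$ is bi-isotropic in $(U^\perp/U, \mathcal{L}')$.

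It remains to handle the bi-Lagrangian case, and this is where I expect the main obstacle. Being bi-Lagrangian is not merely bi-isotropy: by Assertion 3.2 it is equivalent to being maximal bi-isotropic and admissible, and equivalently to the dimension condition $\dim L = \dim V - \tfrac12 \operatorname{rk}\mathcal{L}$. The delicate point is that the rank of the induced pencil $\mathcal{L}'$ drops in a controlled way relative to $\operatorname{rk}\mathcal{L}$, and that the dimension of $L'$ matches the required half-rank deficiency in the quotient. I would compute $\operatorname{rk}\mathcal{L}'$ by relating the kernel of $A'_\lambda$ on $U^\perp/U$ to the kernel of $A_\lambda$ on $V$ for a regular $\lambda$, using the identity $\dim(U^\perp/U) = \dim V - 2\dim U$ (valid because $U$ is bi-isotropic and admissible, hence $\dim U^\perp = \dim V - \dim U + \dim(U \cap \operatorname{Ker} A_\lambda)$ must be reconciled with the admissibility of $U$). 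The expected clean outcome, which I would aim to verify carefully, is $\operatorname{rk}\mathcal{L}' = \operatorname{rk}\mathcal{L} - 2\dim U + 2\dim(U \cap \operatorname{Ker}\mathcal{L})$ or an analogous correction, after which the dimension bookkeeping shows $\dim L' = \dim(U^\perp/U) - \tfrac12\operatorname{rk}\mathcal{L}'$. Alternatively, to sidestep the rank computation, I would prove $L'$ is maximal bi-isotropic and admissible in the quotient: maximality follows because any bi-isotropic extension of $L'$ pulls back to a bi-isotropic subspace of $V$ containing $L$, contradicting maximality of $L$; admissibility of $L'$ should follow from admissibility of both $L$ and $U$ by transporting the coincidence of skew-orthogonal complements through the reduction. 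This reformulation via maximality is, I believe, the safer path and avoids the most error-prone rank arithmetic.
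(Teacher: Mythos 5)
Parts (1) and the bi-isotropic half of part (2) of your proposal are sound: the one point that needs care --- that $A_\lambda(U,U^\perp)=0$ for \emph{all} $\lambda$, not just the generic ones where $U^{\perp_{A_\lambda}}=U^\perp$ --- is handled by exactly the observation you gesture at, namely that $\lambda\mapsto A_\lambda(u,w)=A(u,w)+\lambda B(u,w)$ is affine, so vanishing for almost all $\lambda$ forces identical vanishing. (Note the paper itself states this theorem without proof, importing it from \cite{Kozlov24BiIntBiPoissReduct}, so the comparison is with the standard reduction argument.) The genuine gap is in the bi-Lagrangian case, and it sits precisely in the route you call ``the safer path.'' Your maximality argument fails: a bi-isotropic extension $\tilde L'\supsetneq L'$ in $U^\perp/U$ pulls back to a bi-isotropic subspace of $V$ containing $\left(L\cap U^\perp\right)+U$, but \emph{not} containing $L$, since in general $L\not\subseteq U^\perp$; so there is no contradiction with maximality of $L$. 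Moreover, maximal bi-isotropic is strictly weaker than bi-Lagrangian --- by \cite[Assertion 3.2]{Kozlov24BiLagr} you would also need admissibility of $L'$, which you only assert can be ``transported,'' and in this paper's Definition~\ref{Def:BiLagr} bi-Lagrangian means bi-isotropic plus the dimension condition, so the dimension count is the natural target in any case.

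The rank arithmetic you tried to sidestep is in fact the correct and complete route, and your conjectured formula is exactly right: choosing $\lambda$ regular for both $\mathcal{L}$ and $\mathcal{L}'$ with $U^{\perp_{A_\lambda}}=U^\perp$, one has $\left(U^\perp\right)^{\perp_{A_\lambda}}=U+\operatorname{Ker}A_\lambda$, hence $\operatorname{Ker}A'_\lambda=\left(U+\operatorname{Ker}A_\lambda\right)/U$ and $\operatorname{rk}\mathcal{L}'=\operatorname{rk}\mathcal{L}-2\dim U+2\dim\left(U\cap\operatorname{Ker}A_\lambda\right)$. But the bookkeeping then needs $\dim\left(L\cap U^\perp\right)$, and the key lemma you never identify is: for regular $\lambda$, a bi-Lagrangian $L$ satisfies $\operatorname{Ker}A_\lambda\subseteq L$ and $L^{\perp_{A_\lambda}}=L$ (indeed $L+\operatorname{Ker}A_\lambda$ is again $A_\lambda$-isotropic, and $\dim L=\dim V-\frac{1}{2}\operatorname{rk}A_\lambda$ is the maximal isotropic dimension, forcing the inclusion). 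Granting this, the rank of the map $l\mapsto A_\lambda(l,\cdot)|_U$ from $L$ to $U^*$ equals $\dim U-\dim\left(U\cap L^{\perp_{A_\lambda}}\right)=\dim U-\dim\left(U\cap L\right)$, so $\dim\left(L\cap U^\perp\right)=\dim L-\dim U+\dim\left(L\cap U\right)$; combined with $\dim L'=\dim\left(L\cap U^\perp\right)-\dim\left(L\cap U\right)$ this yields $\dim L'=\dim\left(U^\perp/U\right)-\frac{1}{2}\operatorname{rk}\mathcal{L}'$, as required. Without this lemma your phrase ``the dimension bookkeeping shows'' does not close the argument, since $\dim\left(L\cap U^\perp\right)$ is exactly the quantity the whole case turns on.
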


We need the following simple statement.

\begin{assertion}[{\cite[Assertion 4.1]{Kozlov24BiIntBiPoissReduct}}] \label{A:AdmSpectr} Under the conditions of Theorem~\ref{T:BiPoissReduction}, if the admissible subspace contains the core subspace $K \subset U$, then the following holds.

\begin{enumerate} 

\item All eigenvalues of $\mathcal{L} '$ are eigenvalues of $\mathcal{L} $, i.e. \begin{equation} \label{Eq:SpectrSub} \sigma(\mathcal{L} ') \subseteq \sigma(\mathcal{L} ).\end{equation} In other words, if $A_{\lambda} \in \mathcal{L} $ is regular, then the induced form $A'_{\lambda}$ is also regular.

\item The induced pencil  $\mathcal{L} ' = \left\{A'_{\lambda}\right\}$ is nondegenerate, i.e. $\operatorname{Ker}\mathcal{A}'_{\lambda} = 0$ for generic $\lambda$.

\end{enumerate}

\end{assertion}

\subsection{Bi-Poisson reduction theorem} \label{SubS:BiPoisRedTh}

The next result is the main technique that allows us to bi-integrate bi-Hamiltonian systems. A subbundle $\Delta \subset T^*M$ is bi-isotropic (admissible, etc) if each subspace $\Delta_x \subset T^*_x M$ is bi-isotropic (admissible, etc).

\begin{theorem}[{\cite[Theorem 4.3]{Kozlov24BiIntBiPoissReduct}}]  \label{T:BiPoissRed}  Let $\mathcal{P} = \left\{ A_{\lambda} = \mathcal{A} + \lambda \mathcal{B}\right\}$ be a Poisson pencil on $M$ such that $\operatorname{rk} \mathcal{\mathcal{P}}(x) = 2k$ for all $x \in M$. Let $\Delta \subset T^*M$ be an integrable bi-isotropic admissible subbundle that contains the core distribution $\mathcal{K} \subset \Delta$. Then the following holds:

\begin{enumerate} 

\item $\Delta^{\perp}$ is an integrable admissible subbundle of  $T^*M$. 

\item Moreover, there exist local coordinates \begin{equation} \label{Eq:LocCoorBiPoissRed} (p, f, q) = (p_1,\dots, p_{m_1}, f_1, \dots, f_{m_2}, q_1, \dots, q_{m_3})\end{equation} such that \begin{equation} \label{Eq:DDperpleDistLoc} \Delta = \operatorname{span}\left\{dq_1, \dots, dq_{m_3} \right\}, \quad \Delta^{\perp} = \operatorname{span}\left\{df_1, \dots, df_{m_2}, dq_1, \dots, dq_{m_3} \right\}\end{equation} and the pencil has the form \begin{equation} \label{Eq:BiPoissMat} \mathcal{A}_{\lambda} = \sum_{i=1}^{m_1} \frac{\partial}{\partial p_i} \wedge v_{\lambda, i} + \sum_{1 \leq i < j \leq m_2} c_{\lambda, ij}(f, q)  \frac{\partial}{\partial f_i} \wedge \frac{\partial}{\partial f_j} \end{equation} for some vectors $v_{\lambda, i} = v_{\lambda, i}(p, f, q)$ and some functions $c_{\lambda, ij}(f, q)$. 

\end{enumerate}

\end{theorem}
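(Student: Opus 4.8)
The plan is to establish the two parts separately, treating part~(1) as a geometric statement about subbundles and part~(2) as a Darboux-type construction whose real content is the well-definedness of the reduced bracket. Throughout I would work near a point where all relevant ranks are locally constant, so that Frobenius' theorem applies and $\Delta,\Delta^{\perp}$ are honest subbundles; constancy of $\operatorname{rk}\mathcal{P}\equiv 2k$ and of $\dim\Delta$ is what guarantees this.

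First I would pin down $\Delta^{\perp}$. Bi-isotropy gives $\Delta\subseteq\Delta^{\perp_{\mathcal{A}_\lambda}}$ for every $\lambda$ directly, and since $\Delta$ is admissible the complements $\Delta_x^{\perp_{\mathcal{A}_\lambda}}=\operatorname{Ann}(\mathcal{A}_\lambda\Delta_x)$ agree for all regular $\lambda$, so I define $\Delta^{\perp}$ to be this common subbundle. Applying the pointwise linear theory---Theorem~\ref{T:BiPoissReduction} with Assertion~\ref{A:AdmSpectr}, to the linear pencil $\mathcal{P}(x)$ and the subspace $U=\Delta_x$, which contains the core $\mathcal{K}_x$ by hypothesis---yields that $\Delta^{\perp}$ is again admissible and that the reflexivity $(\Delta^{\perp})^{\perp}=\Delta$ holds; the inclusion $\mathcal{K}\subseteq\Delta$ is exactly what is needed for this last identity.

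Next, integrability of $\Delta^{\perp}$. Because $\Delta$ is integrable I may write $\Delta=\operatorname{span}\{dq_1,\dots,dq_{m_3}\}$ locally, and bi-isotropy reads $\{q_i,q_j\}_\lambda=\mathcal{A}_\lambda(dq_i,dq_j)=0$ for every $\lambda$. Then for a fixed regular $\lambda$ the Hamiltonian vector fields $X^\lambda_i=\mathcal{A}_\lambda\,dq_i$ satisfy $[X^\lambda_i,X^\lambda_j]=\mathcal{A}_\lambda\,d\{q_i,q_j\}_\lambda=0$, so $\mathcal{A}_\lambda\Delta=\operatorname{span}\{X^\lambda_1,\dots,X^\lambda_{m_3}\}$ is an involutive distribution whose annihilator is precisely $\Delta^{\perp}$; by Frobenius, $\Delta^{\perp}$ is integrable. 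I would then extend $q_1,\dots,q_{m_3}$ to functions $q_1,\dots,q_{m_3},f_1,\dots,f_{m_2}$ with $\Delta^{\perp}=\operatorname{span}\{df_1,\dots,df_{m_2},dq_1,\dots,dq_{m_3}\}$ and complete to a coordinate system $(p,f,q)$ as in~\eqref{Eq:LocCoorBiPoissRed}, noting that $\operatorname{Ann}(\Delta^{\perp})=\operatorname{span}\{\partial_{p_1},\dots,\partial_{p_{m_1}}\}$ and that reflexivity forces $\mathcal{A}_\lambda\Delta=\operatorname{Ann}(\Delta^{\perp})=\operatorname{span}\{\partial_{p_a}\}$ for regular $\lambda$.

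It remains to read off~\eqref{Eq:BiPoissMat}, and here lies the main obstacle. The identity $\Delta^{\perp}=\operatorname{span}\{df,dq\}$ means the $f$'s and $q$'s annihilate $\mathcal{A}_\lambda\Delta$, i.e. $\{q_i,q_j\}_\lambda=\{f_a,q_i\}_\lambda=0$; hence every term of the bivector containing a $dq$ must also contain a $dp$, and collecting all brackets involving at least one $p$-coordinate into $\sum_{a}\partial_{p_a}\wedge v_{\lambda,a}$ leaves only the block $\sum_{i<j}c_{\lambda,ij}\,\partial_{f_i}\wedge\partial_{f_j}$ with $c_{\lambda,ij}=\{f_i,f_j\}_\lambda$. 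The crux is that $c_{\lambda,ij}$ depends only on $(f,q)$, which is exactly the well-definedness of the reduced bracket on $\Delta^{\perp}/\Delta$ from Theorem~\ref{T:BiPoissReduction}. I would prove it directly by writing each $\partial_{p_a}=\sum_c g_{ac}X^\lambda_c$ (possible since the $X^\lambda_c$ span $\operatorname{Ann}(\Delta^{\perp})$) and computing, via the Jacobi identity,
\[ \partial_{p_a}\{f_i,f_j\}_\lambda=\sum_{c} g_{ac}\,\{q_c,\{f_i,f_j\}_\lambda\}_\lambda=0, \]
each summand vanishing because $\{q_c,f_i\}_\lambda=\{q_c,f_j\}_\lambda=0$. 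Since $c_{\lambda,ij}$ is affine in $\lambda$, $p$-independence for regular $\lambda$ propagates to all $\lambda$. The delicate points throughout are the uniform rank control needed for Frobenius and for the successive extension of the $q$'s to $f$'s to $p$'s, and the reflexivity $(\Delta^{\perp})^{\perp}=\Delta$, which is where the core hypothesis $\mathcal{K}\subseteq\Delta$ is essential.
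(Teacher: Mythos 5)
Your proposal is correct: the definition of $\Delta^{\perp}$ via admissibility, the use of $\mathcal{K}\subseteq\Delta$ to get constant rank of $\mathcal{A}_{\lambda}\Delta$ and the reflexivity $(\Delta^{\perp})^{\perp}=\Delta$, Frobenius applied to the involutive distribution spanned by the $X^{\lambda}_i=\mathcal{A}_{\lambda}dq_i$, and the Jacobi-identity computation showing $\{f_i,f_j\}_{\lambda}$ is constant along $\operatorname{Ann}(\Delta^{\perp})$ (with affine-in-$\lambda$ propagation from regular to all $\lambda$) all check out. The present paper gives no proof of Theorem~\ref{T:BiPoissRed} --- it is quoted from \cite{Kozlov24BiIntBiPoissReduct} --- and your argument is precisely the standard bi-Poisson reduction proof that the cited source follows, so there is nothing genuinely different to compare.
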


Simply speaking, the matrices of the Poisson brackets in Theorem~\ref{T:BiPoissRed} take the form \[\mathcal{A}_{\lambda} = \left( \begin{matrix} * & * & * \\ * & C_{\lambda}(f, q) & 0 \\ * & 0 & 0\end{matrix} \right), \]  where $*$ are some matrices.  Obviously, the vector fields $v_{\lambda, i} = v_{\lambda, i}(x, s, y)$ and the functions $c_{\lambda, ij}(s, y)$ depend linearly on $\lambda$: \[ v_{\lambda, i} = v_{0, i} + \lambda v_{\infty, i}, \qquad c_{\lambda, ij}(f, q) = c_{0, ij}(f, q) + \lambda c_{\infty, ij}(f, q).\]

\begin{definition} Let $\mathcal{P}$ be a Poisson pencil on $M$ with constant rank and $\Delta \subset T^*M$ be an integrable bi-isotropic admissible subbundle. We perform a local \textbf{bi-Poisson reduction} near $x \in M$ by quotienting a sufficiently small neighborhood $U$ of $x$ by the distribution $\left(\Delta^{\perp}\right)^0$. This induces a new Poisson pencil $\mathcal{P}'$ on the quotient space $U/ \left(\Delta^{\perp}\right)^0$,  with the projection \[ \pi: (U, \mathcal{P}) \to \left(U/ \left(\Delta^{\perp}\right)^0, \mathcal{P}' \right).\] \end{definition}

Theorem~\ref{T:BiPoissRed} guarantees that we can perform (local) bi-Poisson reduction. In the local coordinates $(p, f, q)$ from this theorem \[ \left(\Delta^\perp\right)^0 = \operatorname{span}\left\{\frac{\partial}{\partial p_1}, \dots, \frac{\partial}{\partial p_{m_1}}\right\}.\] Thus, $(f,q)$ are local coordinates on the quotient $U/ \left(\Delta^{\perp}\right)^0$ and the induced pencil $\mathcal{P}'$ takes the form \begin{equation} \label{Eq:InducedPencil} \mathcal{P}' = \left(\begin{matrix}  C_{\lambda}(f, q) & 0 \\ 0 & 0 \end{matrix} \right).\end{equation}

\subsection{Factorization theorem} \label{SubS:Split}

Consider a pencil \eqref{Eq:InducedPencil} induced after bi-Poisson reduction. We can ``group'' the coordinates $f$ by eigenvalues. Formally, we have the following statement. 

\begin{theorem}  \label{T:TrivKronFact} Let $\mathcal{P} = \left\{ \mathcal{A}_{\lambda} = \mathcal{A} + \lambda \mathcal{B}\right\}$ be a Poisson pencil on a real smooth manifold $M$ and $p_{\mathcal{P}(x)}$ be its characteristic polynomial at $x\in M$. Assume the following:

\begin{enumerate}
    \item For all $x \in M$ we have \begin{equation} \label{Eq:LocRkPConst} \deg p_{\mathcal{P}(x)} =  \operatorname{rk} \mathcal{P}(x) = \operatorname{const}.\end{equation} 

\item At a point $p \in M$ the characteristic polynomial $p_{\mathcal{P}(x)}$  has $k$ real (distinct) eigenvalues $\lambda_1, \dots, \lambda_k$  with multiplicities $m_1, \dots, m_k$ respectively and $s$ pairs of complex (non-real) conjugate eigenvalues $\mu_1, \bar{\mu}_1,\dots, \mu_s, \bar{\mu}_s$ with  multiplicities $l_1, \dots, l_s$.
    
\end{enumerate}  Then in a neighborhood of $p \in M$ there exists a local coordinate
system
\begin{gather*} x_1 = \left(x_1^1, \dots, x_1^{2m_1}\right), \qquad \dots, \qquad x_k = \left(x_k^1, \dots, x_k^{2m_k}\right), \\ u_1 = \left(u_1^1, \dots, u_1^{4l_1}\right), \quad \dots, \quad u_s = \left(u_s^1, \dots, x_s^{4l_s}\right), \quad z = (z_1, \dots, z_r), \end{gather*} such that the matrices of Poisson brackets have the form \begin{equation} \label{Eq:FormOneEigen} \mathcal{A}_{\lambda} = \left( \begin{matrix} C^1_{\lambda}(x_1, z) & & & & & & & \\ & \ddots & & & & & \\ & & C^k_{\lambda}(x_k, z) & & & & & \\ & & & D^1_{\lambda}(u_1, z) & & & & \\ & & & & \ddots & & & \\ & & &  & & & D^s_{\lambda}(u_s, z)  & \\ & & &  & & &   & 0_r \end{matrix} \right). \end{equation} Moreover,  at the point $p \in M$ each characteristic polynomial of the pencils $\left\{ C^t_{\lambda}(x_t, z) \right\}$ has a single real eigenvalue. And each characteristic polynomial of the pencils $\left\{ D^t_{\lambda}(u_t, z) \right\}$ has a single pair of complex eigenvalues at $p \in M$. \end{theorem}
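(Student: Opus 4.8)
The plan is to peel off the common Casimir directions, reduce to a nondegenerate pencil, split that pencil fiberwise by separated eigenvalue clusters, and integrate the resulting invariant subbundles into the coordinate system \eqref{Eq:FormOneEigen}. First I would use condition~\eqref{Eq:LocRkPConst}: it says precisely that $\mathcal{P}$ is a pencil with common Casimirs, so, exactly as in the discussion preceding \eqref{Eq:LocCommCas}, I choose $r = \dim M - \operatorname{rk}\mathcal{P}$ common local Casimir functions $z = (z_1,\dots,z_r)$ and complete them to coordinates $(x,z)$ in which the pencil takes the form \eqref{Eq:LocCommCas}, with $\hat{\mathcal{A}}_\lambda(x,z)$ nondegenerate for regular $\lambda$. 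This already produces the trailing $0_r$ block, and by Assertion~\ref{A:BrackComCas} the reduced pencil $\hat{\mathcal{P}} = \{\hat{\mathcal{A}}_\lambda(x,z)\}$ is, for every fixed value of the parameters $z$, a compatible nondegenerate pencil. It therefore suffices to factor $\hat{\mathcal{P}}$.

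On the nondegenerate part I would form the recursion operator $P$ of Lemma~\ref{L:NonGenDescBiLagr} (for each fixed $z$, with $\hat{\mathcal{B}}$ playing the role of the nondegenerate form), a smooth field of endomorphisms whose spectrum is governed by the eigenvalues of the pencil. At $p$ the spectrum breaks into well-separated clusters --- the single real values $\lambda_1,\dots,\lambda_k$ and the conjugate pairs $\mu_t,\bar\mu_t$ --- and these clusters remain separated on a neighborhood of $p$. For each cluster I take the Riesz projector $\Pi_t = \tfrac{1}{2\pi i}\oint_{\Gamma_t}(\zeta\,\mathrm{Id} - P)^{-1}\,d\zeta$ along a fixed contour $\Gamma_t$ enclosing that cluster and no other; because $P$ is smooth and the contours may be held fixed near $p$, each $\Pi_t$ depends smoothly on $(x,z)$. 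Every real eigenvalue $\lambda_t$ yields a real $P$-invariant subbundle $E_t$ of rank $2m_t$, while each conjugate pair yields a real $P$-invariant subbundle $F_t$ of rank $4l_t$, namely the real part of the sum of the two complex generalized eigenspaces. This is the step that replaces complexification/analyticity and makes the argument work in the real smooth category.

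I would then show the whole pencil block-diagonalizes along $\bigoplus_t E_t \oplus \bigoplus_t F_t$. The recursion operator is self-adjoint with respect to $\hat{\mathcal{B}}$ (one has $\hat{\mathcal{B}}(Pu,v) = \hat{\mathcal{A}}(u,v) = \hat{\mathcal{B}}(u,Pv)$), so generalized eigenspaces attached to disjoint parts of the spectrum are $\hat{\mathcal{B}}$-orthogonal, and, since $\hat{\mathcal{A}} = \hat{\mathcal{B}}\,P$, also $\hat{\mathcal{A}}$-orthogonal. Hence the subbundles are mutually bi-orthogonal and both $\hat{\mathcal{A}}$ and $\hat{\mathcal{B}}$, and therefore every $\hat{\mathcal{A}}_\lambda$, are block-diagonal in any frame adapted to this decomposition, with the block attached to $E_t$ carrying the single eigenvalue $\lambda_t$ at $p$ and the block attached to $F_t$ the single pair $\mu_t,\bar\mu_t$.

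Finally I must turn these subbundles into the coordinates $x_t,u_t$ of \eqref{Eq:FormOneEigen}, with $C^t_\lambda$ depending only on $(x_t,z)$ and $D^t_\lambda$ only on $(u_t,z)$. Here I would invoke that compatibility of $\mathcal{A},\mathcal{B}$ makes $P$ a Nijenhuis operator: its spectral subbundles and all their partial sums are then integrable, so the leaves fit into a product chart, and the smooth splitting theory for Nijenhuis operators provides coordinates in which each block of $P$ involves only its own coordinates together with the central variables $z$. The main obstacle, and the point on which I expect to spend the most care, is precisely this last step in the real smooth setting with genuinely complex eigenvalues: I must verify that the smooth Riesz subbundles really assemble into integrable coordinate foliations and that the block entries of $\hat{\mathcal{B}}$ (not merely of $P$) inherit the correct dependence on $(x_t,z)$ respectively $(u_t,z)$, rather than quoting the analytic splitting used in the earlier parts.
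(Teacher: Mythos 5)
Your proposal is correct and follows essentially the same route as the paper: the paper likewise first uses \eqref{Eq:LocRkPConst} to split off the common Casimir coordinates $z$ as in \eqref{Eq:LocCommCas}, and then splits the leafwise-nondegenerate pencils $\mathcal{P}^z$ by eigenvalue clusters, citing Turiel's Lemma~2 (or, as you do, the splitting theorem for Nijenhuis operators) for exactly the step you reconstruct by hand with Riesz projectors and bi-orthogonality. The one point you flag as open --- that the blocks of $\hat{\mathcal{B}}$, and not merely of $P$, depend only on their own coordinates together with $z$ --- is precisely what the cited Turiel lemma supplies, and in any case follows from closedness of the leafwise symplectic forms: a closed $2$-form that is block-diagonal in a product chart has each block depending only on its own factor's coordinates (the $z$-dependence is unconstrained since closedness is imposed only leafwise, which is all the theorem asserts).
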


\begin{proof}[Proof of Theorem~\ref{T:TrivKronFact}] Since \eqref{Eq:LocRkPConst} holds, locally the pencil $\mathcal{P}$ has the form \eqref{Eq:LocCommCas}. Let  $z_1, \dots, z_r$ be common (local) Casimir functions of regular pencils $\mathcal{A}_{\lambda}$. On each common symplectic leaf $S_z = \left\{ z_1 = \operatorname{const}, \dots, z_r = \operatorname{const}\right\}$ the pencil $\mathcal{P}$ defines a nondegenerate\footnote{A pair of nondegerenerate Poisson brackets $\mathcal{A}$ and $\mathcal{B}$ are compatible iff the recursion operator $P =\mathcal{A}\mathcal{B}^{-1}$ is a Nijenhuis operator, i.e. $N_{P} = 0$. Compatible nondegenerate Poisson brackets are the same as compatible symplectic forms $\mathcal{A}^{-1}$ and $\mathcal{B}^{-1}$.} Poisson pencil $\mathcal{P}^z=\left\{\hat{\mathcal{A}}_{\lambda} \right\}
$. We can "split" the nondegenerate pencils $\mathcal{P}^z$ using \cite[Lemma 2]{turiel}. Alternatively, one can use  the splitting theorem for Nijenhuis operators (see \cite[Theorem 3.1]{BolsinovNijenhuis}). We get coordinates $x_1, \dots, x_k, u_1, \dots u_s$ such that the matrices of the pencils $\mathcal{P}^z$ are block-diagonal: \[ \mathcal{P}^z =  \left( \begin{matrix} C^1_{\lambda}(x_1, z) & &  \\ & \ddots &  \\ & & D^s_{\lambda}(u_s, z) \end{matrix} \right). \] Since $z_i$ are Casimir function, the pencil $\mathcal{P}$ takes the form \eqref{Eq:FormOneEigen}. Theorem~\ref{T:TrivKronFact} is proved. \end{proof}

\section{Standard integrals} \label{SubS:StandInt}

Standard integrals were defined in Definition~\ref{Def:StandInt}. The next statement is proved similar to \cite[Lemma 5.1, Lemma 5.3]{Kozlov24BiIntBiPoissReduct}.

\begin{lemma} \label{L:StandIntAdm} Let $v = \mathcal{A}_{\lambda}d H_{\lambda}$ be a system that is bi-Hamiltonian w.r.t. a pencil $\mathcal{P}= \left\{\mathcal{A}_{\lambda}\right\} $. The family of standard integrals $\mathcal{F}$ on $M$ is an admissible family of functions in bi-involution. The standard integrals $\mathcal{F}$ on $M$ are first integrals of the bi-Hamiltonian system. \end{lemma}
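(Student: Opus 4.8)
The plan is to deduce all three assertions from the pointwise bi-involution of the individual standard integrals, isolating the genuinely new difficulty — the complex-conjugate eigenvalues — and reducing it by complexification to the results already available in \cite[Lemma 5.1, Lemma 5.3]{Kozlov24BiIntBiPoissReduct} for the all-real-eigenvalue and complex analytic cases. Throughout I work in a small neighborhood of a JK-regular point, where the standard integrals are defined and $\mathcal{K} \subseteq d\mathcal{F}$.

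First I would dispose of the first-integral claim, since it is a formal consequence of bi-involution. For any $g \in \mathcal{F}$ and any $\lambda \in \bar{\mathbb{R}}$ we have $v(g) = \langle dg, \mathcal{A}_\lambda dH_\lambda \rangle = \{g, H_\lambda\}_{\mathcal{A}_\lambda}$. Because $H_\lambda$ itself belongs to $\mathcal{F}$, once $\mathcal{F}$ is known to be in bi-involution this bracket vanishes and $v(g) = 0$. Hence it suffices to establish bi-involution and admissibility of $d\mathcal{F}$.

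For bi-involution I would split the integrals by type. The mutual brackets among Casimirs, Hamiltonians, and real eigenvalues are handled exactly as in the prequel, so the only new input is a pair $\alpha_j, \beta_j$ arising from a complex-conjugate eigenvalue $\mu_j = \alpha_j + i\beta_j$. Here I extend every bracket $\mathcal{A}_\lambda$ $\mathbb{C}$-bilinearly to complex-valued functions and work with $\mu_j, \bar{\mu}_j$, which are smooth complex-valued functions near $x_0$: their multiplicities, being JK-invariants, stay constant, so the corresponding roots of the characteristic polynomial $p_{\mathcal{P}(x)}$ depend smoothly on $x$ and remain off the real axis (cf. the smooth eigenvalue-splitting of Theorem~\ref{T:TrivKronFact}). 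The vanishing of $\{\mu_j, g\}^{\mathbb{C}}_{\mathcal{A}_\lambda}$ and $\{\bar{\mu}_j, g\}^{\mathbb{C}}_{\mathcal{A}_\lambda}$ for every complexified standard integral $g$ and every $\lambda$ is precisely the computation carried out for complex eigenvalues in the complex analytic setting of the prequel; it is purely algebraic (it rests on compatibility $[\mathcal{A}, \mathcal{B}] = 0$ and on the coefficients of $p_{\mathcal{P}}$ being in bi-involution, with $d\mu_j$ a smooth combination of the coefficient differentials), so it transfers to the $C^\infty$ category unchanged. Since $\alpha_j = \tfrac12(\mu_j + \bar{\mu}_j)$ and $\beta_j = \tfrac{1}{2i}(\mu_j - \bar{\mu}_j)$, real bi-involution of $\alpha_j, \beta_j$ with all standard integrals and with one another follows by taking real and imaginary parts of $\{\mu_j, \mu_k\}_{\mathcal{A}_\lambda} = \{\mu_j, \bar{\mu}_k\}_{\mathcal{A}_\lambda} = 0$. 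For admissibility I would again pass to the complexification, using that a real subspace $U$ is admissible if and only if $U^{\mathbb{C}}$ is, since skew-orthogonal complements commute with complexification: $(U^{\mathbb{C}})^{\perp_{\mathcal{A}_\lambda^{\mathbb{C}}}} = (U^{\perp_{\mathcal{A}_\lambda}})^{\mathbb{C}}$. Over $\mathbb{C}$ the differentials $d\alpha_j, d\beta_j$ span the same subspace as $d\mu_j, d\bar{\mu}_j$, so $d\mathcal{F}(x)^{\mathbb{C}}$ is exactly the span of the complexified Casimir, Hamiltonian, and complex-eigenvalue differentials, i.e. the differentials of the standard integrals of the complexified pencil; this is admissible by the complex analytic version of the lemma, and hence $d\mathcal{F}(x)$ is admissible over $\mathbb{R}$.

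The main obstacle is making the complexification argument rigorous in the smooth rather than analytic category: I must confirm that the complex eigenvalues $\mu_j$ are genuine smooth complex-valued functions near $x_0$ (this is where JK-regularity and constancy of the eigenvalue multiplicities enter, via the smooth dependence of the eigenvalue factors as in Theorem~\ref{T:TrivKronFact}), and that the algebraic bi-involution identities established analytically in the prequel do not covertly rely on analyticity. Once these points are verified, the real and imaginary parts inherit both bi-involution and admissibility automatically, and the first-integral property follows as above.
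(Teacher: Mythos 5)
Your overall architecture (reduce everything to the eigenvalue pairs $\alpha_j \pm i\beta_j$, handle them by pointwise complexification, deduce the first-integral property formally from bi-involution since $H_\lambda \in \mathcal{F}$) matches the paper's intent, but the step you yourself flag as needing verification is exactly where the proof fails as written, and it is the step the paper spends a new theorem on. You assert that the bi-involution computation for a complex eigenvalue ``is purely algebraic \dots so it transfers to the $C^{\infty}$ category unchanged.'' It does not. The prequel's key identity \cite[Lemma 5.2]{Kozlov24BiIntBiPoissReduct}, i.e.\ $(\mathcal{A} - \lambda_i(x)\mathcal{B})\,d\lambda_i(x) = 0$ (Lemma~\ref{L:EigenDiff} here), is not a pointwise algebraic fact: its proof, and the proof of the involutivity of the coefficients of $p_{\mathcal{P}}$ that you invoke, rest on differential-geometric input (families of local Casimir functions of the brackets $\mathcal{A}+\mu\mathcal{B}$, which for non-real $\mu$ on a merely smooth real manifold need not exist as complex-valued functions — this is an integrability issue of Newlander--Nirenberg type, not linear algebra). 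This is precisely why the present paper does \emph{not} transfer the prequel's computation: it replaces Lemma~\ref{L:EigenDiff} by the genuinely new Lemma~\ref{L:EigenLemmaRealConj}, whose proof goes through the smooth factorization Theorem~\ref{T:TrivKronFact}, which in turn relies on Turiel's splitting lemma, equivalently the splitting theorem for Nijenhuis operators — the place where the integrable complex structure $J = f(P)$ built from the semisimple part of the recursion operator enters. Note also that the smooth conclusion is strictly weaker than the analytic one: one only gets $d\lambda(x) \in \operatorname{Ker}^{\mathbb{C}}_{-\lambda(x)} + \mathcal{K}^{\mathbb{C}}$, modulo the core and only almost everywhere, rather than the clean kernel identity; your proposal never registers this weakening, although the subsequent bi-involution and admissibility arguments must be run with the core correction term present.

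A secondary instance of the same gap: for admissibility you appeal to ``the complex analytic version of the lemma'' applied to $d\mathcal{F}(x)^{\mathbb{C}}$. The reduction of real admissibility to complex admissibility via $\bigl(U^{\perp_{\mathcal{A}_\lambda}}\bigr)^{\mathbb{C}} = \bigl(U^{\mathbb{C}}\bigr)^{\perp_{\mathcal{A}_\lambda^{\mathbb{C}}}}$ is fine (it is pointwise linear algebra), but the complex analytic lemma of the prequel is a statement about holomorphic pencils on complex manifolds, and the pointwise complexification of a smooth real pencil satisfies none of its hypotheses — there is no complexified manifold, no holomorphic Hamiltonians. What you actually need, both for admissibility and for bi-involution, is the location of $d\mu_j, d\bar\mu_j$ (and $dH_\lambda$) relative to the kernels of the complexified brackets, i.e.\ exactly Lemma~\ref{L:EigenLemmaRealConj}; so your argument is circular at that point. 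To repair the proposal, replace the ``transfers unchanged'' claim by a proof of Lemma~\ref{L:EigenLemmaRealConj} via Theorem~\ref{T:TrivKronFact} (split the pencil into blocks with a single real eigenvalue or a single conjugate pair, observe that on each block $\alpha_j,\beta_j$ become common Casimirs up to the core directions), and then your type-by-type bookkeeping for bi-involution, admissibility, and the first-integral property goes through essentially as in \cite[Lemmas 5.1, 5.3]{Kozlov24BiIntBiPoissReduct}, which is the paper's own route.
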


 In the proof of Lemma~\ref{L:StandIntAdm} we should replace the following statement (which is \cite[Lemma 5.2]{Kozlov24BiIntBiPoissReduct}) with its analog  Lemma~\ref{L:EigenLemmaRealConj} for complex conjugate eigenvalues.

\begin{lemma} \label{L:EigenDiff} Let $\mathcal{P} = \left\{ \mathcal{A} + \lambda \mathcal{B} \right\}$ be a Poisson pencil on a manifold $M$. For any JK-regular point $x \in (M, \mathcal{P})$ and any finite eigenvalue $\lambda_i(x) < \infty$ we have \begin{equation} \label{Eq:Eigen1} (\mathcal{A} - \lambda_i (x) \mathcal{B}) d\lambda_i (x) =0.\end{equation} \end{lemma}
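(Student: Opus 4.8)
The plan is to turn \eqref{Eq:Eigen1}, which is an identity on $M$, into an eigencovector statement on a single symplectic leaf carrying a nondegenerate pencil, where it becomes the classical fact that the differential of an eigenvalue of a Nijenhuis operator is an eigenform of its transpose. First I would localize using the factorization Theorem~\ref{T:TrivKronFact}. Since $x$ is JK-regular and $\lambda_i(x)<\infty$, it puts the pencil into the block-diagonal form \eqref{Eq:FormOneEigen} in coordinates $(x_t,\dots,u_\bullet,\dots,z)$, where $z=(z_1,\dots,z_r)$ are common Casimirs and the real eigenvalue $\lambda_i$ is the unique eigenvalue of exactly one block $C^t_\lambda(x_t,z)$; in particular $\lambda_i$ is a function of $(x_t,z)$ only. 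Because $\mathcal{A}$ and $\mathcal{B}$ annihilate all of $dz_1,\dots,dz_r$ and send the cotangent directions of each block into the tangent directions of the same block, applying $\mathcal{A}-\lambda_i\mathcal{B}$ to $d\lambda_i$ kills every term except the one coming from the $x_t$-block. Writing $C^t_\lambda=\hat{\mathcal{A}}+\lambda\hat{\mathcal{B}}$ for the restriction of that block to a leaf $\{z=\operatorname{const}\}$, identity \eqref{Eq:Eigen1} is therefore equivalent to the leaf identity $(\hat{\mathcal{A}}-\lambda_i\hat{\mathcal{B}})\,d\lambda_i=0$.

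Since $\lambda_i<\infty$ is the only eigenvalue of this block, $\infty$ is not an eigenvalue, so $\hat{\mathcal{B}}$ is nondegenerate on the leaf and $\hat{\mathcal{A}},\hat{\mathcal{B}}$ form a compatible nondegenerate pair. Hence the recursion operator $P=\hat{\mathcal{A}}\hat{\mathcal{B}}^{-1}$ is a Nijenhuis operator (as recalled in the proof of Theorem~\ref{T:TrivKronFact}), and skew-symmetry of the two forms gives $P^{*}=\hat{\mathcal{B}}^{-1}\hat{\mathcal{A}}$ for its transpose acting on $1$-forms. The leaf identity is then exactly $P^{*}d\lambda_i=\lambda_i\,d\lambda_i$. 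To prove it I would exploit that $P$ has the single eigenvalue $\lambda_i$: if $2m_t$ is the dimension of the block then $P-\lambda_i\operatorname{Id}$ is nilpotent, so $\operatorname{tr}P=2m_t\lambda_i$ and $\operatorname{tr}P^{2}=2m_t\lambda_i^{2}$; in particular $\lambda_i=\tfrac{1}{2m_t}\operatorname{tr}P$ is visibly smooth. Combining this with the standard Nijenhuis trace identity $d\operatorname{tr}(P^{2})=2\,P^{*}d\operatorname{tr}(P)$ gives $4m_t\lambda_i\,d\lambda_i=4m_t\,P^{*}d\lambda_i$, i.e.\ $P^{*}d\lambda_i=\lambda_i\,d\lambda_i$, which is the required identity.

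The main obstacle is the Nijenhuis trace identity $d\operatorname{tr}(P^{2})=2P^{*}d\operatorname{tr}(P)$, which is the only place where compatibility ($N_P=0$) is genuinely used; everything else is bookkeeping once Theorem~\ref{T:TrivKronFact} supplies the single-eigenvalue block and reduces the problem to one leaf. I would either cite this identity from the Nijenhuis-operator literature or verify it directly from the vanishing of the torsion $N_P$. Two caveats: the reduction relies on JK-regularity (both to apply Theorem~\ref{T:TrivKronFact} and to guarantee that $\lambda_i$ is a genuine smooth eigenvalue of constant multiplicity, which here is built into $\lambda_i=\tfrac{1}{2m_t}\operatorname{tr}P$), and the argument as stated covers a real eigenvalue; the complex-conjugate pairs $\alpha_j\pm i\beta_j$ are handled by the analogous Lemma~\ref{L:EigenLemmaRealConj}, applied to the blocks $D^t_\lambda(u_t,z)$ in \eqref{Eq:FormOneEigen}.
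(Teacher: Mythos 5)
The leaf-wise core of your argument is fine: in the nondegenerate single-eigenvalue block the identities $P^{*}=\hat{\mathcal{B}}^{-1}\hat{\mathcal{A}}$, $\lambda_i=\tfrac{1}{2m_t}\operatorname{tr}P$ and the Nijenhuis chain identity $d\operatorname{tr}(P^{2})=2P^{*}d\operatorname{tr}(P)$ do yield $P^{*}d\lambda_i=\lambda_i d\lambda_i$. The gap is at your very first step: Theorem~\ref{T:TrivKronFact} is not applicable under the hypotheses of Lemma~\ref{L:EigenDiff}. That theorem requires condition \eqref{Eq:LocRkPConst}, $\deg p_{\mathcal{P}(x)}=\operatorname{rk}\mathcal{P}(x)$, i.e.\ that $\mathcal{P}$ is a pencil with common Casimirs, whose Jordan--Kronecker decomposition contains only trivial $1\times 1$ Kronecker blocks. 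JK-regularity does not imply this: it fixes the algebraic type locally but says nothing about the Kronecker blocks being trivial, whereas the lemma is asserted for arbitrary pencils. Indeed, the typical situation in which the lemma is needed — a pencil \emph{before} bi-Poisson reduction, e.g.\ with nontrivial Kronecker blocks as for generic Lie--Poisson pencils — has $\deg p_{\mathcal{P}}<\operatorname{rk}\mathcal{P}$, and no splitting \eqref{Eq:FormOneEigen} exists. Note also the logical order in the paper: Lemma~\ref{L:EigenDiff} feeds into Lemma~\ref{L:StandIntAdm}, which is what licenses the bi-Poisson reduction producing the common-Casimirs form \eqref{Eq:AfterRedPenc}; the setting where your factorization applies is reached only downstream of the lemma.

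The natural repair — first quotient by the core distribution $\mathcal{K}$ so as to land in the common-Casimirs setting, then run your argument — provably yields only a weaker conclusion, namely $d\lambda_i\in\operatorname{Ker}(\mathcal{A}-\lambda_i\mathcal{B})+\mathcal{K}$ at generic points: the reduced identity, pulled back, controls $(\mathcal{A}-\lambda_i\mathcal{B})\,d\lambda_i$ only modulo directions tangent to the fibers of the projection, and $\mathcal{K}\not\subset\operatorname{Ker}(\mathcal{A}-\lambda_i\mathcal{B})$ once nontrivial Kronecker blocks are present (in a $(2k+1)$-dimensional Kronecker block the core is $(k+1)$-dimensional while the kernel at $\lambda=\lambda_i$ is one-dimensional). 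This is exactly the shape of the paper's complex analogue: Lemma~\ref{L:EigenLemmaRealConj} is derived from Theorem~\ref{T:TrivKronFact} in precisely this way, and for precisely this reason it asserts only the containment \eqref{Eq:DLambdaMainRealConj} modulo $\mathcal{K}^{\mathbb{C}}$ and only almost everywhere, whereas Lemma~\ref{L:EigenDiff} claims the exact identity \eqref{Eq:Eigen1} at every JK-regular point. The present paper does not prove Lemma~\ref{L:EigenDiff} at all — it imports it as \cite[Lemma 5.2]{Kozlov24BiIntBiPoissReduct}, where it is established for general pencils by a different argument. As written, your proof is correct only in the common-Casimirs case and cannot deliver \eqref{Eq:Eigen1} in the stated generality.
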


The next statement follows from Theorem~\ref{T:TrivKronFact}.

\begin{lemma} \label{L:EigenLemmaRealConj} Let $\mathcal{P} = \left\{\mathcal{A} + \lambda \mathcal{B} \right\}$ be a Poisson pencil on real manifold $M$ and $\lambda(x) = \alpha(x) + i \beta(x)$ be its complex eigenvalue on $M$. Then almost everywhere on $M$  we have \begin{equation} \label{Eq:DLambdaMainRealConj} d\lambda(x) = d\alpha(x) + i \cdot d \beta(x) \in \operatorname{Ker}_{-\lambda(x)}^{\mathbb{C}} + \mathcal{K}^{\mathbb{C}}.  \end{equation} 
\end{lemma}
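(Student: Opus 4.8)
\textbf{Proof proposal for Lemma~\ref{L:EigenLemmaRealConj}.}

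The plan is to reduce the statement to a computation inside a single block of the factorization provided by Theorem~\ref{T:TrivKronFact}, and then mimic the real-eigenvalue argument of Lemma~\ref{L:EigenDiff} after complexifying. First I would restrict to a generic (in particular JK-regular) point $x$, so that the hypotheses of Theorem~\ref{T:TrivKronFact} hold: the degree of the characteristic polynomial equals the rank and is locally constant. Applying that theorem gives local coordinates splitting the pencil into blocks $C^t_\lambda(x_t,z)$, $D^t_\lambda(u_t,z)$ and a trivial Kronecker part $0_r$. The complex eigenvalue $\lambda(x)=\alpha(x)+i\beta(x)$ and its conjugate $\bar\lambda(x)$ are carried by exactly one complex block $D^t_\lambda(u_t,z)$; on all other blocks $\lambda$ is not an eigenvalue. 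Since the coordinates $z=(z_1,\dots,z_r)$ are common Casimir functions, their differentials $dz_j$ span (a piece of) the core distribution $\mathcal{K}$, so any contribution to $d\lambda$ coming from the $z$-directions automatically lands in $\mathcal{K}^{\mathbb{C}}$ and can be set aside. Thus the whole problem localizes to the single block $D^t_\lambda$, which at $x$ has a single pair of complex eigenvalues $\pm\lambda$ up to sign conventions.

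The key step is then the block computation. After passing to the complexification, the block $\{D^t_\lambda\}$ is a nondegenerate (after restricting to a symplectic leaf $S_z$) complex linear pencil whose recursion operator has a single pair of eigenvalues $\lambda,\bar\lambda$. The eigenvalue $\lambda(x)$ as a function on $M$ is, up to the ambient Casimir directions, a function on the leaf where the pencil $\{\hat{\mathcal{A}}_\lambda\}$ is nondegenerate. There I would invoke the same mechanism that proves Lemma~\ref{L:EigenDiff}: for a nondegenerate pencil the differential of an eigenvalue of the recursion operator is annihilated by $(\mathcal{A}-\lambda\mathcal{B})$, equivalently it lies in the appropriate generalized kernel $\operatorname{Ker}_{-\lambda}$. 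The only genuinely new feature is that here $\lambda$ is complex, so $\operatorname{Ker}_{-\lambda}$ must be understood inside the complexified cotangent space, giving $d\alpha+i\,d\beta\in\operatorname{Ker}^{\mathbb{C}}_{-\lambda(x)}$. Concretely, one differentiates the characteristic (Pfaffian) relation $p_{\mathcal{P}(x)}(\lambda(x))\equiv 0$ that defines $\lambda(x)$ implicitly, and reads off that $d\lambda$ pairs trivially with the $\lambda$-eigenspace of the (complexified) pencil, exactly as in the real case; the modification of $\operatorname{Ker}$ by $\mathcal{K}^{\mathbb{C}}$ accounts for the common Casimir directions left over from the reduction to a single leaf.

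Assembling the two pieces gives \eqref{Eq:DLambdaMainRealConj}: the block computation places $d\lambda$ in $\operatorname{Ker}^{\mathbb{C}}_{-\lambda(x)}$ modulo the $z$-directions, and those $z$-directions contribute to $\mathcal{K}^{\mathbb{C}}$, so altogether $d\lambda\in\operatorname{Ker}^{\mathbb{C}}_{-\lambda(x)}+\mathcal{K}^{\mathbb{C}}$. Finally I would note that the argument is valid only at JK-regular points where eigenvalue multiplicities are locally constant and the eigenvalue $\lambda(x)$ is a smooth function, which is why the conclusion is asserted only almost everywhere; the JK-regular points of the prescribed algebraic type form an open dense subset, so the ``almost everywhere'' qualifier is exactly what the splitting hypothesis of Theorem~\ref{T:TrivKronFact} delivers.

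The main obstacle I anticipate is making the complex eigenvalue $\lambda(x)=\alpha(x)+i\beta(x)$ a bona fide smooth object: individually $\lambda(x)$ is a root of a real polynomial and need not be globally smooth where eigenvalues collide or swap, so the careful bookkeeping is to work on the open dense JK-regular stratum where the single complex pair persists, differentiate the implicit relation cleanly there, and verify that the real and imaginary parts $\alpha,\beta$ assemble into $d\lambda$ landing in the complexified kernel rather than merely each lying in some real subspace. Keeping track of the sign/normalization in $\operatorname{Ker}_{-\lambda}$ versus $\operatorname{Ker}_{\lambda}$ (the analog of the $-\lambda_i$ appearing in \eqref{Eq:Eigen1}) is the other point that needs care.
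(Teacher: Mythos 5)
Your plan has the right overall shape (localize via Theorem~\ref{T:TrivKronFact}, then complexify the mechanism of Lemma~\ref{L:EigenDiff} on the nondegenerate block), but there is a genuine gap at the very first step: you claim that at a generic, JK-regular point the hypotheses of Theorem~\ref{T:TrivKronFact} hold, i.e.\ that $\deg p_{\mathcal{P}(x)} = \operatorname{rk}\mathcal{P}(x) = \operatorname{const}$. This is false in general. That condition says the pencil has \emph{common Casimirs}, equivalently that its Kronecker part consists only of trivial $1\times 1$ blocks; but nontrivial Kronecker blocks are a stable feature of the JK-type of a pencil (e.g.\ typical Lie--Poisson pencils have them), and JK-regularity fixes the Kronecker canonical form without making it trivial. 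So on an open set of interesting examples Theorem~\ref{T:TrivKronFact} simply does not apply to $\mathcal{P}$ itself, and your argument never gets off the ground there. Relatedly, your bookkeeping for the correction term is off: you attribute the $\mathcal{K}^{\mathbb{C}}$ summand in \eqref{Eq:DLambdaMainRealConj} to the common-Casimir directions $dz_j$, but when nontrivial Kronecker blocks are present the core distribution $\mathcal{K} = \bigoplus_{\lambda\ \mathrm{regular}} \operatorname{Ker}\mathcal{A}_{\lambda}$ is strictly larger than any family of common Casimir differentials (the kernels genuinely vary with $\lambda$), and there may be no common Casimirs at all. The missing idea is to first perform bi-Poisson reduction with respect to $\Delta = \mathcal{K}$ (Theorem~\ref{T:BiPoissRed}; the core is integrable, bi-isotropic and admissible near generic points, by the Part~I results the paper quotes). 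By Assertion~\ref{A:AdmSpectr} the reduced pencil $\mathcal{P}'$ is nondegenerate on leaves, i.e.\ has the common-Casimir form \eqref{Eq:InducedPencil}, and its Jordan eigenvalues coincide with those of $\mathcal{P}$; only \emph{then} does Theorem~\ref{T:TrivKronFact} split off the complex block $D^t_\lambda$. The pullback of the reduced kernel is $\operatorname{Ker}^{\mathbb{C}}_{-\lambda} + \mathcal{K}^{\mathbb{C}}$, which is exactly where the $\mathcal{K}^{\mathbb{C}}$ term in the conclusion comes from --- it is the trace of this reduction, not of the $z$-coordinates.

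A secondary weak point: on the nondegenerate block you propose to ``differentiate the Pfaffian relation $p_{\mathcal{P}(x)}(\lambda(x))\equiv 0$ and read off'' that $(\mathcal{A}-\lambda\mathcal{B})\,d\lambda = 0$. Implicit differentiation gives $d\lambda$ as a ratio of derivatives of $p$, but membership of $d\lambda$ in the kernel does not follow from this formula by inspection; the actual mechanism is the Nijenhuis property of the recursion operator (eigenvalue one-forms of a Nijenhuis operator satisfy $P^{*}d\lambda = \lambda\, d\lambda$, see \cite{BolsinovNijenhuis}), which does extend to complex-conjugate eigenvalues by complexifying leafwise --- consistent with your instinct, but the step needs that citation or argument rather than the Pfaffian shortcut. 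With the reduction by $\mathcal{K}$ inserted first and the kernel step justified via the Nijenhuis eigenvalue property, your outline matches the route the paper intends (it derives the lemma from Theorem~\ref{T:TrivKronFact} after exactly this kind of reduction); as written, however, the proposal only proves the lemma for pencils that already have common Casimirs.
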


Here at each point $x \in M$ we complexify the cotangent space $T^* M$ and extend $\mathcal{A}(x)$ and $\mathcal{B}(x)$ to the skew-symmetric forms $\mathcal{A}^{\mathbb{C}} (x)$ and $\mathcal{B}^{\mathbb{C}} (x)$ on $(T^*M)^{\mathbb{C}}$. Then $\mathcal{K}^{\mathbb{C}}$ is the complexification of the core distribution $\mathcal{K}$ and  \[\operatorname{Ker}_{-\lambda(x)}^{\mathbb{C}} = \operatorname{Ker}\left(\mathcal{A}^{\mathbb{C}}(x) - \lambda(x) \mathcal{B}^{\mathbb{C}}(x)\right). \]

\section{Proof of Theorem~\ref{T:MainGenTh}} \label{S:MainProof}

The proof is in several steps:

\begin{enumerate}

\item \textit{Perform bi-Poisson reduction w.r.t. $d\mathcal{F}$} (see Section~\ref{S:BiPoisRed}). Note that $d \mathcal{F}$ satisfies conditions of Theorem~\ref{T:BiPoissRed} by Lemma~\ref{L:StandIntAdm} (also, $\mathcal{K} \subseteq d \mathcal{F}$ by Definition~\ref{Def:SmallNeigh}). The next statement easily follows from Theorem~\ref{T:BiPoissRed} and Assertion~\ref{A:AdmSpectr}.

\begin{assertion} \label{A:AfterRed} Let $\mathcal{P}'$ denote the Poisson pencil that results from performing bi-Poisson reduction w.r.t. the family of standard integrals $\mathcal{F}$. The following properties hold:

\begin{enumerate}

\item $\mathcal{P}'$ is a Poisson pencil with common Casimirs. In other words, in some local coordinates  $(f, q) = (f_1, \dots, f_{n_1}, q_1, \dots, q_{n_2})$ the pencil has the form \begin{equation} \label{Eq:AfterRedPenc} \mathcal{P}' = \left( \begin{matrix} C_{\lambda}(f, q) & 0 \\ 0 & 0 \end{matrix}\right), \end{equation}  where the pencils $\left\{C_{\lambda}(f, q) \right\}$ are nondegenerate for fixed $z$.

\item All eigenvalues of the pencil $\mathcal{P}'$ are its common Casimir functions. For complex eigenvalues $\alpha_j(x) + i \beta_j(x)$ the real part $\alpha_j(x)$ and the imaginary part $\beta_j(x)$ are common Casimir functions.

\item All eigenvalues of $\mathcal{P}'$ are also eigenvalues of the original pencil $\mathcal{P}$.

\end{enumerate}

\end{assertion} 

As we proceed, we substitute the pencil $\mathcal{P}$ with the reduced pencil $\mathcal{P}'$.

\item \label{Step:RedEigen} \textit{Reduction to the case of one real eigenvalue or a pair of complex conjugate complex eigenvalues}. After the bi-Poisson reductions, the pencil $\mathcal{P}$ has the form \eqref{Eq:AfterRedPenc}. Hence, we can use Theorem~\ref{T:TrivKronFact}.  By selecting the appropriate coordinates $x_j, z$ or $u_j, z$ we can effectively reduce the general case to a simpler scenario involving either a single eigenvalue  $\lambda_j(x)$ or a pair of complex conjugate eigenvalues $\alpha_j(x) \pm i \beta_j(x)$.

\item \textit{Case of a single real eigenvalue}. Assume that after the previous step there is only one eigenvalue $\lambda_1(x)$ and it is a common Casimir function for all brackets \[ \lambda_1(x) \in \mathcal{C}\left( \mathcal{A}_{\lambda}\right), \qquad \forall \lambda \in \bar{\mathbb{C}}.\] Without the loss of generality, $\lambda_1(x) = 0$, since by Assertion~\ref{A:NewPoissonCasimir} we can replace the pencil $\mathcal{P}$ with the new pencil \[\hat{\mathcal{P}} = \left\{\hat{\mathcal{A}} + \lambda \mathcal{B} \right\}, \qquad \hat{\mathcal{A}} = \mathcal{A} - \lambda_1(x) \mathcal{B}.\] By Theorem~\ref{Th:Turiel2} the considered Poisson pencil $\left\{\mathcal{A} + \lambda \mathcal{B} \right\}$ is locally flat. Thus, we can take local coordinates in which the pencil has constant coefficients \[\mathcal{A}^{ij}(x) = \operatorname{const}, \qquad \mathcal{B}^{ij}(x) = \operatorname{const}.\] The family of functions $\mathcal{F}$ can be easily extended to a complete family of functions $\mathcal{G}$ that are in bi-involution. This can be accomplished by incorporating the coordinate functions $g_1,\dots, g_N$ that define the bi-Lagrangian subspace $L$.

\item \textit{Case of a pair of complex conjugate complex eigenvalues}. Assume that after Step~\ref{Step:RedEigen} there is a pair of complex conjugate eigenvalues $\alpha(x) \pm i \beta(x)$. Without loss of generality, the pencil $\mathcal{P}$ has the form \eqref{Eq:AfterRedPenc}. Complexify each symplectic leaf $\left\{q_j = \operatorname{const}\right\}$ similarly to how it is done in \cite[Section 6]{turiel} or how it is done for Nijenhuis operators in \cite[Section 3.3]{BolsinovNijenhuis}. In short, the semi-simple part of the recursion operator defines a complex structure. Take the function \[ f(z) = \begin{cases} i, \quad \operatorname{Im} z > 0, \\ -i, \quad \operatorname{Im} z < 0. \end{cases} \] By \cite[Proposition 3.2]{BolsinovNijenhuis}  $J = f(P)$, where $P$ is the recursion operator, is a complex structure $J^2 = - \operatorname{id}$. After complexification on each symplectic leaf $\left\{q_j = \operatorname{const}\right\}$ we get a nondegenerate Poisson pencil with one complex eigenvalue $\lambda_0(x) = \alpha(x) + i \beta(x)$.  We use the following statement.

\begin{assertion} \label{A:NewBrack} The Poisson bracket \[ \hat{\mathcal{A}} = \mathcal{A} - \lambda_0(x) \mathcal{B} = \mathcal{A} - \alpha(x) \mathcal{B} - \beta(x) J \circ \mathcal{B}\] is compatible with $\mathcal{B}$. The pencil $\hat{\mathcal{P}} = \left\{\hat{\mathcal{A}} + \lambda \mathcal{B}\right\}$ has one zero eigenvalue and, therefore, it is flat.  \end{assertion}

\begin{proof}[Proof of Assertion~\ref{A:NewBrack}] By Assertion~\ref{A:BrackComCas}, it suffices to prove that $\hat{\mathcal{A}}$ and $\mathcal{B}$ are compatible on each symplectic leaf $S_q = \left\{q_j = \operatorname{const}\right\}$. By Assertion~\ref{A:AfterRed}, the functions $\alpha(x)$ and $\beta(x)$ are constants on each symplectic leaf $S_q$. Hence, on each leaf the complexified $\hat{\mathcal{A}} = \mathcal{A} - \lambda_0(x) \mathcal{B}$ is a linear combination with constant coefficients and it is compatible with $\mathcal{B}$. The pencil $\hat{\mathcal{P}}$ is flat by Theorem~\ref{Th:Turiel2}. Assertion~\ref{A:NewBrack} is proved.  \end{proof}

 By Corollary~\ref{Cor:BiLagrNilp} the bi-Lagrangian subspace $L$ is also bi-Lagrangian for $\hat{\mathcal{P}}$. In coordinates where $\hat{\mathcal{P}}$ has constant coefficients, we construct a complete family of functions $\mathcal{G}$ for both $\hat{\mathcal{P}}$ and $\mathcal{P}$ by selecting coordinate functions $g_1,\dots, g_N$, similar to the case of one real eigenvalue.
    
\end{enumerate}

Theorem~\ref{T:MainGenTh} is proved.

\begin{remark} Note that after bi-Poisson reduction w.r.t. standard integrals the induced pencil $\mathcal{P}'$ becomes flat when restricted to each common symplectic leaf (i.e. $\mathcal{P}'$ is \textbf{leaf-wise flat}). By applying Turiel's local coordinates from  \cite{turiel} to each leaf, we can locally bring $\mathcal{P}'$ to the form as in the Jordan-Kronecker theorem, but with eigenvalues $\lambda(z)$ depending on the common Casimir functions.  \end{remark}

\end{document}